\newtheorem{theorem}{Theorem}
\newtheorem{lemma}[theorem]{Lemma}
\newtheorem{proposition}[theorem]{Proposition}
\newtheorem{conjecture}[theorem]{Conjecture}
\newtheorem{problem}[theorem]{Problem}
\newcommand{\diam}{{\rm diam}}
\begin{document}

\title{On distance-balanced generalized Petersen graphs}

\author{Gang Ma$^{a}$ \and Jianfeng Wang$^{a,}$\footnote{Corresponding author} \and Sandi Klav\v{z}ar$^{b,c,d}$}

\date{}

\maketitle
\vspace{-0.8 cm}
\begin{center}
$^a$ School of Mathematics and Statistics, Shandong University of Technology\\
 Zibo, China\\
{\tt math$\_$magang@163.com\\
jfwang@sdut.edu.cn}\\
\medskip

$^b$ Faculty of Mathematics and Physics, University of Ljubljana, Slovenia\\
{\tt sandi.klavzar@fmf.uni-lj.si}\\
\medskip

$^c$ Faculty of Natural Sciences and Mathematics, University of Maribor, Slovenia\\
\medskip

$^d$ Institute of Mathematics, Physics and Mechanics, Ljubljana, Slovenia\\
\end{center}

\begin{abstract}
A connected graph $G$ of diameter ${\rm diam}(G) \ge \ell$  is $\ell$-distance-balanced if $|W_{xy}|=|W_{yx}|$ for every $x,y\in V(G)$ with $d_{G}(x,y)=\ell$, where $W_{xy}$ is the set of vertices of $G$ that are closer to $x$ than to $y$. We prove that the generalized Petersen graph $GP(n,k)$ is ${\rm diam}(GP(n,k))$-distance-balanced provided that $n$ is large enough relative to $k$. This partially solves a conjecture posed by  Miklavi\v{c} and \v{S}parl \cite{Miklavic:2018}. We also determine  ${\rm diam}(GP(n,k))$ when $n$ is large enough relative to $k$.
\end{abstract}

\noindent {\bf Key words:} generalized Petersen graph; distance-balanced graph; $\ell$-distance-balanced graph; diameter

\medskip\noindent
{\bf AMS Subj.\ Class:} 05C12

\section{Introduction}
\label{S:intro}

If $G = (V(G), E(G))$ is a connected graph and $x, y\in V(G)$, then the {\it distance} $d_{G}(x, y)$ between $x$ and $y$ is the number of edges on a shortest $x,y$-path. The diameter $\diam(G)$ of $G$ is the maximum distance between its vertices. The set $W_{xy}$ contains the vertices that are closer to $x$ than to $y$, that is,
$$W_{xy}=\{w\in V(G):\ d_{G}(w,x) < d_{G}(w,y)\}\,.$$
Vertices $x$ and $y$ are {\em balanced} if $|W_{xy}| = |W_{yx}|$.  For an integer $\ell \in [\diam(G)] = \{1,2,\ldots, \diam(G)\}$ we say that $G$ is $\ell$-{\em distance-balanced} if each pair of vertices $x,y\in V(G)$ with $d_{G}(x,y) = \ell$ is balanced. $G$ is said to be {\em highly distance-balanced} if it is $\ell$-distance-balanced for every $\ell\in [\diam(G)]$. $1$-distance-balanced graphs are simply called {\em distance-balanced} graphs.

Distance-balanced graphs were first considered by Handa~\cite{Handa:1999} back in 1999, while the term ``distance-balanced'' was proposed a decade later by Jerebic et al.\ in~\cite{Jerebic:2008}. The latter paper was the trigger for intensive research of distance-balanced graphs, see~\cite{Abiad:2016, Balakrishnan:2014, Balakrishnan:2009, Cabello:2011, cavaleri-2020, fernardes-2022, Ilic:2010, Kutnar:2006, Kutnar:2009, Kutnar:2014, Miklavic:2012, YangR:2009}.
The study of distance-balanced graphs is interesting from various purely graph-theoretic aspects where one focuses on particular properties of such graphs such as symmetry, connectivity or complexity aspects of algorithms related to such graphs.
Moreover, distance-balanced graphs have motivated the introduction of the hitherto much-researched Mostar index~\cite{ali-2021, doslic-2018} and distance-unbalancedness of graphs~\cite{kr-2021, miklavic-2021, xu-2022}. In this context, distance-balanced graphs are the graphs with the Mostar index equal to 0.

In his dissertation~\cite{Frelih:2014}, Frelih generalized distance-balanced graphs to $\ell$-distance balanced graphs. The special case of $\ell=2$ has been studied in detail in~\cite{Frelih:2018}. Among other results it was demonstrated that there exist $2$-distance-balanced graphs that are not $1$-distance-balanced. $2$-distance-balanced graphs that are not $2$-connected were characterized as well as $2$-distance-balanced Cartesian and lexicographic products. In this direction, $\ell$-distance-balanced corona products and lexicographic products were investigated in~\cite{Jerebic:2021}. In~\cite{Miklavic:2018}, Miklavi\v{c} and \v{S}parl obtained some general results on $\ell$-distance balanced graphs. They studied graphs of diameter at most $3$ and investigated $\ell$-distance-balancedness of cubic graphs, in particular of generalized Petersen graphs.
Although generalized Petersen graphs are a family of cubic graphs but it is difficult to determine whether they are $\ell$-distance-balanced or not for some $\ell$.
And that is what has stimulated the main interest in this article. Before we explain this in more detail, let us define these graphs.

If $n\ge 3$ and $1\le k<n/2$, then the {\em generalized Petersen graph} $GP(n,k)$ is defined by
\begin{align*}
V(GP(n,k)) & = \{u_i:\ i\in \mathbb{Z}_n\} \cup\{v_i:\ i\in \mathbb{Z}_n\}, \\
E(GP(n,k)) & = \{u_iu_{i+1}:\ i\in \mathbb{Z}_n\} \cup\{v_iv_{i+k}:\ i\in \mathbb{Z}_n\} \cup \{u_iv_i:\ i\in \mathbb{Z}_n\}.
\end{align*}
$GP(6,2)$ is shown in Figure~\ref{F:GP(6,2)}.
\begin{figure}[htbp]
\begin{center}
\includegraphics[scale=0.9]{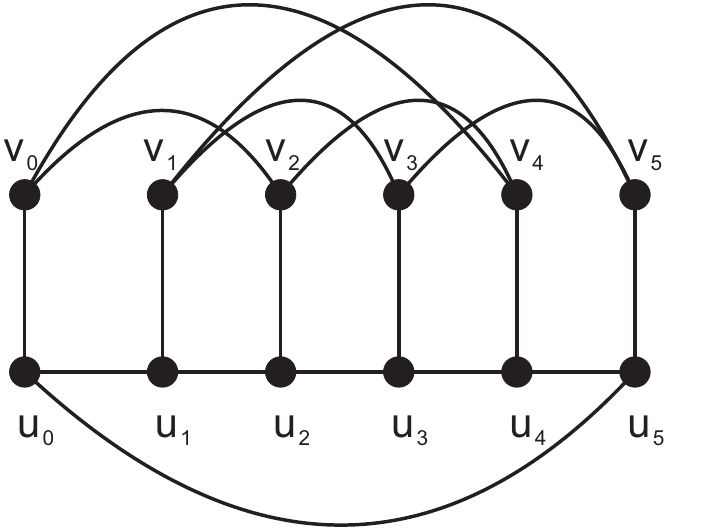}\\
\caption{\small The generalized Petersen graph $GP(6,2)$. The cycle $u_0u_1u_2u_3u_4u_5$ contains six outer edges. $v_0v_2$, $v_1v_3$, $v_2v_4$, $v_3v_5$, $v_4v_0$ and $v_5v_1$ are six inner edges.
Finally $u_0v_0$, $u_1v_1$, $u_2v_2$, $u_3v_3$, $u_4v_4$ and $u_5v_5$ are six spokes.}\label{F:GP(6,2)}
\end{center}
\end{figure}

Now, we recall the following conjecture and result, where the conjecture was supported by an extensive computer search.

\begin{conjecture} {\rm \cite[Conjecture 5.2]{Miklavic:2018}}
\label{C:GP-distance}
If $n\ge 3$, $2\le k< n/2$, and there exists $j\in\mathbb{Z}_n$ such that $d(u_0,v_j) = \diam(GP(n,k))$, then either $n=4m$ and $k=2m-1$ for some $m\ge 3$, or $(n,k)\in \{(5,2), (7,2), (7,3)\}$.
\end{conjecture}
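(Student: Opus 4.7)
The plan is to prove the conjecture under the hypothesis that $n$ is sufficiently large relative to $k$, which is the regime in which the paper operates. The rotational automorphism $u_i\mapsto u_{i+1}$, $v_i\mapsto v_{i+1}$ reduces the problem to three one-parameter functions $d(u_0,u_j)$, $d(v_0,v_j)$, $d(u_0,v_j)$ of $j\in\mathbb{Z}_n$. The diameter of $GP(n,k)$ is the maximum of the three corresponding eccentricities, and the conjecture asserts that the ``mixed'' eccentricity $\max_j d(u_0,v_j)$ is strictly smaller than the maximum of the two ``like-like'' eccentricities, except in the listed cases.

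The first and most substantial step is a structural description of shortest paths in $GP(n,k)$ when $n$ is large relative to $k$. Any $u_0$-rooted path is a word over the moves ``$\pm 1$ on the outer cycle'', ``$\pm k$ on the inner cycle'', and ``toggle via spoke'', where a $u_0\to u_j$ path must use an even number of spokes and a $u_0\to v_j$ path an odd number. For $n$ sufficiently large, an optimal path can be shown to use at most two spokes; combining all inner-cycle jumps into a single block then yields the essentially tight bound
\[
d(u_0,u_j)\;=\;\min\!\left\{\,\|j\|_n,\;\;\min_{m\in\mathbb{Z}}\bigl(2+|m|+\|j-mk\|_n\bigr)\right\},
\]
where $\|\cdot\|_n$ denotes cyclic distance on $\mathbb{Z}_n$, together with an analogous expression for $d(u_0,v_j)$ built from an odd number of spokes. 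The farthest targets lie near $j\approx n/2$, with the optimal inner-cycle block of size $m\approx n/(2k)$ adjusted by arithmetic residues.

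Next, I would derive explicit piecewise formulas for $\max_j d(u_0,u_j)$ and $\max_j d(u_0,v_j)$, parametrised by the residue of $n$ modulo $2k$ and the parity of $n$. A case-by-case comparison should show that the mixed eccentricity is strictly smaller than the like-like one, except precisely when $n=4m$ and $k=2m-1$; in that distinguished family both maxima coincide, and exhibiting a specific $v_j$ at maximum distance from $u_0$ confirms that mixed pairs realise the diameter. Combined with a direct check of the small cases $(5,2)$, $(7,2)$, $(7,3)$, this completes the conjecture in the large-$n$ regime.

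The main obstacle is precisely this arithmetic case analysis. The formulas for the two eccentricities fragment according to $n \bmod 2k$ and the position of $\lfloor n/2\rfloor$ relative to multiples of $k$, and the strict inequality must be established in each subcase. Isolating the configurations in which the inequality becomes an equality, and showing that these correspond exactly to the family $n=4m$, $k=2m-1$, is the technical heart of the argument; the paper's simultaneous determination of $\diam(GP(n,k))$ should provide the formulas on which this comparison rests.
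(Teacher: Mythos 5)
Your overall strategy follows the same first half as the paper's argument: restrict shortest paths to ones with few spokes, express distances as minima over the number of inner jumps, and study the farthest inner vertex from $u_0$. But two essential pieces are missing. First, the reduction to ``at most two spokes'' (equivalently, exactly one spoke for a shortest $u_0,v_j$-path) is precisely where the hypothesis that $n$ is large relative to $k$ must enter; the paper devotes a long lemma, with a careful case analysis over lifts $J=j+rn$ and both traversal directions, to proving that a shortest $u_0,v_j$-path uses exactly one spoke and is one of four explicit types, and this statement is simply false when $n$ is small relative to $k$ (which is exactly why the exceptional family in the conjecture exists). You assert it ``can be shown'' without argument. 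Second, and more seriously, the step that carries all the content of the statement --- the strict inequality between $\max_j d(u_0,v_j)$ and the diameter, verified across the residue of $n$ modulo $2k$, the parities of $n$ and $k$, and the position of the maximizer --- is only promised (``a case-by-case comparison should show\dots''), and you explicitly defer to ``the paper's determination of $\diam(GP(n,k))$'' for the formulas this comparison would rest on. So what you have is a plan whose technical heart is absent, not a proof.

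There is also a conceptual slip about what would come out of that comparison. In the regime you restrict to, the family $n=4m$, $k=2m-1$ satisfies $n=2k+2$, which violates the lower bounds on $n$ (e.g.\ $n\ge k(k+1)/2$ for odd $k\ge 5$); so within the large-$n$ regime your comparison would never produce an equality case, and expecting the exceptional family to ``correspond exactly'' to the equality configurations of your analysis is incoherent. Likewise, checking $(5,2)$, $(7,2)$, $(7,3)$ does not ``complete the conjecture'': the range $2k+1\le n$ below the paper's bounds is handled neither by you nor by the paper, which only proves the conjecture partially. Finally, note that the paper avoids the heavy eccentricity comparison you propose: after locating a maximizer $j^*$ of $d(u_0,v_j)$ with $k<j^*\le n/2$ (via its one-spoke lemma and the $d_{12}$/$d_{34}$ case analysis), it simply appends the spoke $v_{j^*}u_{j^*}$ to a shortest $u_0,v_{j^*}$-path to obtain a shortest $u_0,u_{j^*}$-path, whence $d(u_0,v_{j^*})<d(u_0,u_{j^*})\le \diam(GP(n,k))$, with no need for an explicit formula for $\max_j d(u_0,u_j)$. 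To salvage your route you would have to prove the two-spoke reduction for outer--outer pairs as well and then actually carry out the full piecewise comparison; as written, the argument has a genuine gap.
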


\begin{proposition} {\rm \cite[Proposition 5.3]{Miklavic:2018}}
\label{prop:stefko}
If $n\ge 3$, $2\le k< n/2$, and if Conjecture~\ref{C:GP-distance} holds,  then $GP(n,k)$ is $\diam(GP(n,k))$-distance-balanced.
\end{proposition}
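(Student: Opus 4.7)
My proof plan is to exploit the automorphism group of $GP(n,k)$ jointly with Conjecture~\ref{C:GP-distance}. Recall that if $\phi\in\mathrm{Aut}(GP(n,k))$ swaps $x$ and $y$, then $\phi$ restricts to a bijection $W_{xy}\to W_{yx}$, so $x,y$ are automatically balanced. The strategy is to show that, outside the exceptional cases singled out by the conjecture, every diametric pair admits such a swapping automorphism.

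Two standard automorphisms are the rotation $\rho:u_i\mapsto u_{i+1},\ v_i\mapsto v_{i+1}$ and the reflection $\sigma:u_i\mapsto u_{-i},\ v_i\mapsto v_{-i}$ (indices modulo $n$); a direct inspection of the three edge classes confirms $\sigma\in\mathrm{Aut}(GP(n,k))$. For any $a,b\in\mathbb{Z}_n$, the composition $\rho^{a+b}\circ\sigma$ simultaneously swaps $u_a$ with $u_b$ and swaps $v_a$ with $v_b$. Consequently, every pair of two outer vertices, and every pair of two inner vertices, is balanced at whatever distance separates them.

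Granting Conjecture~\ref{C:GP-distance}, suppose first that $(n,k)$ is not in the listed exceptional set. Applying $\rho^{-i}$, any diametric pair $(u_i,v_j)$ would yield $(u_0,v_{j-i})$ at distance $\diam(GP(n,k))$, which the conjecture forbids. Hence every diametric pair is of the form $(u_a,u_b)$ or $(v_a,v_b)$, and the previous paragraph finishes this case.

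It remains to treat the exceptions. The Petersen graph $GP(5,2)$ is arc-transitive and hence highly distance-balanced. The graphs $GP(7,2)$ and $GP(7,3)$ are small enough to be settled by a direct computation of the relevant $W_{xy}$-sets. For the infinite subfamily $GP(4m,2m-1)$ with $m\geq 3$, the arithmetic identity $(2m-1)^2\equiv 1\pmod{4m}$ yields an additional ``exchange'' automorphism $\tau:u_i\mapsto v_{(2m-1)i},\ v_i\mapsto u_{(2m-1)i}$ between the outer and inner rings; composing $\tau$ with appropriate powers of $\rho$ (and, if needed, with $\sigma$) produces involutions swapping the remaining mixed diametric pairs. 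I expect the main obstacle to be precisely this last step: verifying that these $\tau$-based compositions really do exhaust \emph{all} mixed diametric pairs in $GP(4m,2m-1)$, which in turn requires first pinning down which indices realise $\diam(GP(4m,2m-1))$ along a spoke-to-spoke geodesic and then checking a parity/divisibility condition on the difference of those indices. Once this is in hand, the proposition follows.
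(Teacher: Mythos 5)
First, note that the paper you were given does not prove this proposition at all: it is imported verbatim from \cite{Miklavic:2018} (Proposition 5.3) and used as a black box, so your attempt can only be measured against that source's strategy, which your skeleton indeed mirrors. The sound parts: an automorphism swapping $x$ and $y$ does give $|W_{xy}|=|W_{yx}|$; $\rho^{a+b}\circ\sigma$ swaps any two outer (resp.\ inner) vertices; and, granting Conjecture~\ref{C:GP-distance}, a mixed diametric pair forces $(n,k)$ into the exceptional list, so outside it every diametric pair lies on one ring and is balanced. The three sporadic cases are finite checks (one quibble: ``arc-transitive hence highly distance-balanced'' is not a valid implication --- arc-transitivity only swaps pairs at distance $1$; for $GP(5,2)$ you need distance-transitivity or a direct check of the distance-$2$ pairs, though of course the conclusion is true).

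The genuine gap is exactly the step you flag and then defer: the infinite family $GP(4m,2m-1)$, $m\ge 3$, where mixed diametric pairs genuinely occur, is not handled. Concretely, with $n=4m$, $k=2m-1$, $k^2\equiv 1\pmod n$, the map $\rho^{\,b-ka}\circ\tau$ swaps $u_a$ and $v_b$ iff $(k+1)(b-a)\equiv 0\pmod n$, i.e.\ iff $b-a$ is even, while $\rho^{\,b+ka}\circ\tau\circ\sigma$ swaps them iff $(k-1)(a-b)\equiv 0\pmod n$, i.e.\ iff $(m-1)(a-b)\equiv 0\pmod{2m}$ (for $m$ odd this means $a\equiv b\pmod m$). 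So whether your $\tau$-based involutions cover all mixed diametric pairs depends on which residues $b-a$ actually occur at distance $\diam(GP(4m,2m-1))$, and determining those residues requires a distance analysis of this family that the proposal never carries out; without it the proposition is unproved precisely in the one case the conjecture does not exclude. The analysis does come out favourably --- e.g.\ in $GP(12,5)$ the inner vertices at distance $\diam=4$ from $u_0$ are $v_3,v_9$, difference $\pm m=\pm 3$, handled by $\rho^c\circ\tau\circ\sigma$; in $GP(16,7)$ they are $v_4,v_{12}$, difference $\pm m=\pm 4$, even, handled by $\rho^c\circ\tau$ --- so what is missing is a lemma to the effect that mixed diametric pairs in $GP(4m,2m-1)$ have index difference $\equiv\pm m\pmod{4m}$ (or at least: even when $m$ is even, divisible by $m$ when $m$ is odd). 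Until you prove such a statement, the argument for the exceptional family is an expectation, not a proof.
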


The main result of this paper reads as follows.

\begin{theorem}\label{T:GP-DDB}
If $n$ and $k$ are integers, where $3\le k< n/2$ and
\begin{equation*}
n\ge\left\{\begin{array}{ll}
8; & k=3, \\
10; & k=4, \\
\frac{k(k+1)}{2}; & k \ \text{is odd and}\ k\ge 5,\\
\frac{k^2}{2}; & k \ \text{is even and}\ k\ge 6,
\end{array}\right.
\end{equation*}
then $GP(n,k)$ is $\diam(GP(n,k))$-distance-balanced.
\end{theorem}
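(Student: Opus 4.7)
The plan is to reduce the proof to verifying Conjecture~\ref{C:GP-distance} for the specified range of $(n,k)$. The exceptional pairs listed in that conjecture, namely $(5,2)$, $(7,2)$, $(7,3)$ and the infinite family $(n,k)=(4m,2m-1)$ with $m\ge 3$, all lie outside the hypotheses of Theorem~\ref{T:GP-DDB}: the first three satisfy $k\le 3$ and $n\le 7$, while $(4m,2m-1)=(2k+2,k)$ fails $n\ge k(k+1)/2$ for every $k\ge 5$, and the case $k=3$ corresponds to $m=2$, which is not in the conjecture's exceptional family. Hence for our range, verifying the conjecture amounts to showing that no vertex $v_j$ satisfies $d_{GP(n,k)}(u_0,v_j)=\diam(GP(n,k))$. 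Once this is established, the theorem follows by Proposition~\ref{prop:stefko} (or equivalently: every diametric pair is then of the form $(u_i,u_j)$ or $(v_i,v_j)$, and the involution $u_t\mapsto u_{i+j-t}$, $v_t\mapsto v_{i+j-t}$ is an automorphism of $GP(n,k)$ that exchanges the two endpoints, forcing $|W_{xy}|=|W_{yx}|$).

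The technical heart is therefore a sharp distance analysis in $GP(n,k)$. Writing $j=qk+r$ with $0\le r<k$, a natural $u_0$--$u_j$ path uses two spokes, $q$ inner edges, and $r$ outer edges, for total length $q+r+2$; this beats the pure outer walk of length $\min(j,n-j)$ whenever $j$ is moderately large. Maximizing over $j$ yields an explicit formula for $\diam(GP(n,k))$, together with an explicit diametric pair of the form $(u_0,u_{j^*})$ or $(v_0,v_{j^*})$ for some $j^*$ near $n/2$; this also settles the auxiliary claim of the theorem about the diameter. The complementary bound on $d(u_0,v_j)$ exploits the fact that every $u$-$v$ path must contain an odd number of spokes, so that one additional spoke traversal is unavoidable; under the stated lower bounds on $n$ this is enough to guarantee that the worst $d(u_0,v_j)$ falls at least one short of $\diam(GP(n,k))$.

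The principal obstacle is the case analysis, which forks according to the parity of $k$ and the residue classes of $n$ modulo $k$ and $2k$; this is precisely what produces the different thresholds $n\ge k(k+1)/2$ for odd $k\ge 5$ and $n\ge k^2/2$ for even $k\ge 6$, each of which should be tight for the corresponding detour argument. The small values $k\in\{3,4\}$ admit sharper thresholds $n\ge 8$ and $n\ge 10$ and would be handled by direct, finite case-by-case distance computations indexed by $n\bmod k$, since the generic bound is too crude there. Assembling the diameter formula and the $u$-$v$ bound, and combining with the involution above, completes the reduction to Proposition~\ref{prop:stefko}.
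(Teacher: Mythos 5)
Your reduction step is correct and is exactly the paper's: by Proposition~\ref{prop:stefko} (or, as you note, directly via the reflection automorphism $t\mapsto i+j-t$, which swaps the two ends of any $u$--$u$ or $v$--$v$ pair), it suffices to show that for the stated $(n,k)$ no vertex $v_j$ satisfies $d(u_0,v_j)=\diam(GP(n,k))$; your check that the exceptional pairs of Conjecture~\ref{C:GP-distance} fall outside the theorem's range is also right. The gap is in the technical heart, which is where all the work of the paper lies. First, asserting that the two-spoke path of length $q+r+2$ (for $j=qk+r$) ``yields an explicit formula for $\diam(GP(n,k))$'' only gives an upper bound on $d(u_0,u_j)$; to use the diameter as a lower bound against the $u$--$v$ distances you must prove that no shorter $u_0,u_j$-path exists (paths winding around the inner cycle, paths with four spokes, mixed directions), and that matching lower bound is precisely the hard part --- recall that the diameter of $GP(n,k)$ is not known in general. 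Second, and more seriously, your stated mechanism for the key inequality --- ``every $u$--$v$ path contains an odd number of spokes, so one additional spoke traversal is unavoidable'' --- is not an argument and cannot be completed on parity grounds alone: in the excluded family $GP(4m,2m-1)$, $m\ge 3$, every $u_0,v_j$-path also uses an odd number of spokes, and yet (this is what the computer evidence behind Conjecture~\ref{C:GP-distance} reflects) some $v_j$ there \emph{is} at diameter distance from $u_0$. So the lower bounds on $n$ must enter through structural control of shortest paths, not through parity; your sketch never supplies that control.

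For comparison, the paper's proof provides exactly the missing ingredients. A lemma shows that under the stated thresholds on $n$ every shortest $u_0,v_j$-path ($1\le j\le n/2$) contains \emph{exactly one} spoke and is one of four explicit types $P_1,\dots,P_4$ (this is where $n\ge k(k+1)/2$, resp.\ $n\ge k^2/2$, is used, to kill paths that wind past the short way around and paths with three or more spokes). Then an eight-way case analysis (parity of $k$, parity of $n$, and the residue of $\lfloor n/2\rfloor$ modulo $k$) locates an index $j^*$ with $k<j^*\le n/2$ maximizing $d(u_0,v_j)$, and the final step observes that appending the spoke $v_{j^*}u_{j^*}$ to a shortest $u_0,v_{j^*}$-path gives a shortest $u_0,u_{j^*}$-path, whence $d(u_0,v_{j^*})<d(u_0,u_{j^*})\le \diam(GP(n,k))$. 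Note that this route never needs the diameter formula as an input (Theorem on the diameter is read off afterwards), so it avoids the extra burden your plan takes on of computing $\diam(GP(n,k))$ exactly, including ruling out $v$--$v$ pairs of larger distance. If you want to salvage your outline, the one-spoke structure lemma (or an equivalent lower-bound analysis for a single well-chosen pair $u_0,u_{j^*}$) is the idea you must add.
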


Theorem~\ref{T:GP-DDB} is proved in Section~\ref{S:GP-DDB}. In view of Proposition~\ref{prop:stefko},
to prove Theorem~\ref{T:GP-DDB} it suffices to verify that Conjecture~\ref{C:GP-distance} holds true for the cases as listed in the theorem. The difficulty in proving Conjecture~\ref{C:GP-distance} in general lies in the fact that the distance function on generalized Petersen graphs is very difficult to manage and depends heavily on $n$ and $k$. In particular, as pointed out by Miklavi\v{c} and \v{S}parl in~\cite[p.~150]{Miklavic:2018}, the diameter of $GP(n,k)$ is not known in general. In Section~\ref{S:GP-diameter}  we then determine $\diam(GP(n,k))$ for the corresponding values of $n$ and $k$. The rather complicated result indicates that it is indeed difficult to control the diameter of generalized Petersen graphs. Finally, in Section~\ref{S:conluding}, we list some problems which are worth studying in the future.

\section{Proof of Theorem~\ref{T:GP-DDB}}
\label{S:GP-DDB}

Consider the generalized Petersen graph $GP(n,k)$. The edges of the form $u_iu_{i+1}$ are {\em outer} edges, the edges of the form $v_iv_{i+k}$ are {\em inner} edges, and edges of the form $u_iv_i$ are {\em spokes}. To simplify the notation, set $D = \diam(GP(n,k))$ throughout this section. We will also omit the subscript in $d_{GP(n,k))}(x,y)$ as the graph $GP(n,k)$ is clear from the context.

As already stated at the end of the previous section, in order to prove Theorem~\ref{T:GP-DDB}, it suffices to prove that if $n$ and $k$ are integers, where $3\le k< n/2$ and
\begin{equation*}
n\ge\left\{\begin{array}{ll}
8; & k=3, \\
10; & k=4, \\
\frac{k(k+1)}{2}; & k \ \text{is odd and}\ k\ge 5,\\
\frac{k^2}{2}; & k \ \text{is even and}\ k\ge 6,
\end{array}\right.
\end{equation*}
then for any $j\in\mathbb{Z}_n$ we have $d(u_0,v_j) < D$.

By the symmetry of $GP(n,k)$ it suffices to consider $d(u_0, v_j)$, where $0\le j\le n/2$. Our aim is to find an index $j^*$, where $0\le j^*\le n/2$, such that
$$d(u_0,v_{j^*}) = \max\{d(u_0,v_j):\ 0\le j\le n/2\}\,,$$
and prove that $d(u_0,v_{j^*})<D$.

Let $j$ be an integer such that $1\le j\le n/2$. Suppose $j=m_0k+j_0$ and $n-j=m_1k+j_1$, where $0\le j_0,j_1<k$.
Four types of $u_0,v_j$-path are defined in the following.
\begin{align*}
P_1 & = u_0u_1u_2\cdots u_{j_0}v_{j_0}v_{k+j_0}v_{2k+j_0}\cdots v_{m_0k+j_0}, \\
P_2 & = u_0u_{-1}u_{-2}\cdots u_{-(k-j_0)}v_{-(k-j_0)}v_{j_0}v_{k+j_0}\cdots v_{m_0k+j_0}, \\
P_3 & = u_0u_{-1}u_{-2}\cdots u_{-{j_1}}v_{-j_1}v_{-(k+j_1)}v_{-(2k+j_1)}\cdots v_{-(m_1k+j_1)}, \\
P_4 & = u_0u_1u_2\cdots u_{k-j_1}v_{k-j_1}v_{-j_1}v_{-k-j_1}\cdots v_{-m_1k-j_1}.
\end{align*}
Note that $u_{-i}=u_{n-i}$, so $v_{-m_1k-j_1}=v_{n-m_1k-j_1}=v_j=v_{m_0k+j_0}$.
Also note that all $P_1,P_2,P_3,P_4$ have only one spoke.
The length of $P_1$ is $j_0+m_0+1$, the length of $P_2$ is $(k-j_0)+m_0+2$, the length of $P_3$ is $j_1+m_1+1$,
and the length of $P_4$ is $(k-j_1)+m_1+2$.

In $GP(6,2)$, the $u_0,v_3$-path of type $P_1$ is $u_0u_1v_1v_3$.
The $u_0,u_3$-path of type $P_2$ is $u_0u_{-1}v_{-1}v_1v_3=u_0u_{5}v_{5}v_1v_3$.
The $u_0,u_3$-path of type $P_3$ is $u_0u_{-1}v_{-1}v_{-3}=u_0u_{5}v_{5}v_{3}$.
The $u_0,u_3$-path of type $P_4$ is $u_0u_1v_1v_{-1}v_{-3}=u_0u_1v_1v_{5}v_{3}$.

We first prove the following lemma about the $u_0, v_j$-path of $GP(n,k)$.
\begin{lemma}
Suppose that two integers $k,n$ and four paths $P_1,P_2,P_3,P_4$ are the same as above.
In $GP(n,k)$, for any integer $j$ where $1\le j\le n/2$,
a $u_0, v_j$-path of minimum length contains only one spoke and belongs to one of the four types $\{P_1,P_2,P_3,P_4\}$.
\end{lemma}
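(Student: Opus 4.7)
The plan is to establish the lemma in two stages: first, that every shortest $u_0, v_j$-path uses exactly one spoke, and second, that every one-spoke shortest path coincides with one of $P_1, P_2, P_3, P_4$. Since the spokes form an edge-cut between the outer and inner vertices, every $u_0, v_j$-walk uses an odd number of spokes, so at least one is always needed; the first stage asserts that more than one is never optimal.

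For the first stage I would use a local swap argument. Assume for contradiction that a shortest $u_0, v_j$-path $P$ uses spokes $u_{s_1} v_{s_1}, \ldots, u_{s_t} v_{s_t}$ in the order traversed, with $t \ge 3$. In $P$ the sub-walk from $u_{s_1}$ to $v_{s_3}$ consists of spoke, inner walk of length $m$ from $v_{s_1}$ to $v_{s_2}$ (so $s_2 - s_1 \equiv \pm m k \pmod n$), spoke, outer walk of length $a$ from $u_{s_2}$ to $u_{s_3}$, and spoke, contributing $m + a + 3$ edges. Replace this portion by an outer walk $u_{s_1} \to u_{s_1 + (s_3 - s_2)}$ of length $a$, a single spoke, and an inner walk $v_{s_1 + (s_3 - s_2)} \to v_{s_3}$ of length at most $m$; the algebraic identity $s_3 - (s_1 + (s_3 - s_2)) = s_2 - s_1$ shows the new inner walk has the same signed displacement $\pm mk$ as the original, hence length at most $m$. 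The replacement uses only $m + a + 1$ edges, so splicing it into $P$ yields a strictly shorter $u_0, v_j$-walk, contradicting minimality of $P$.

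For the second stage, a one-spoke shortest path with spoke $u_s v_s$ decomposes as a shortest outer $u_0 \to u_s$ walk, the spoke, and a shortest inner $v_s \to v_j$ walk. There are four combinations of outer direction (clockwise or counterclockwise) and inner direction (steps of $+k$ or $-k$), and each yields a candidate for the optimal $s$. Within a fixed combination, the admissible $s$ form an arithmetic progression with common difference $k$, and a direct computation shows that moving one step along this progression strictly increases the total length (by either $k - 1$ or $k + 1$, depending on whether the outer and inner shifts happen in the same or opposite senses). Hence each combination is minimized at the nearest admissible $s$: the four optima are $j_0$, $j_0 - k$, $-j_1$, and $k - j_1$, producing exactly the paths $P_1, P_2, P_3, P_4$ of the stated lengths $j_0 + m_0 + 1$, $k - j_0 + m_0 + 2$, $j_1 + m_1 + 1$, $k - j_1 + m_1 + 2$.

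The main obstacle is the swap step. Its correctness rests on the displacement identity above and on the fact that $v_{s_1 + (s_3 - s_2)}$ and $v_{s_3}$ lie on the same inner cycle, which is automatic since $s_2 - s_1 \equiv \pm mk \equiv 0 \pmod{\gcd(n,k)}$. Once the one-spoke reduction is secured, the classification into the four types reduces to routine arithmetic case analysis over the four direction combinations.
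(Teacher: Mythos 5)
Your first stage (the spoke-reduction swap) is essentially the paper's Claim~1 and is fine: replacing the three-spoke portion by an outer walk to $u_{s_1+(s_3-s_2)}$, one spoke, and an inner walk with the same signed displacement saves at least two edges, and induction disposes of $5,7,\dots$ spokes. The problem is in your second stage. You classify one-spoke paths by "outer direction $\times$ inner direction'' and claim the admissible spoke positions form an arithmetic progression with difference $k$, minimized at $j_0$, $j_0-k$, $-j_1$, $k-j_1$. This implicitly assumes the inner walk realizes the displacement $j-s$ or $(j-n)-s$ \emph{exactly}, i.e.\ that it never winds past a full turn of the index circle. But a one-spoke path may put its spoke at \emph{any} position $s$ in the same inner component as $v_j$ and reach $v_j$ by an inner walk of displacement $j+rn-s$ with $r\notin\{0,-1\}$; such $s$ lie in residue classes mod $k$ other than $j_0$ and $-j_1$ (or in the same class but with extra windings), and they are simply absent from your four progressions. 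Ruling these winding paths out is exactly where the hypothesis $n\ge \frac{k(k+1)}{2}$ ($k$ odd) or $n\ge\frac{k^2}{2}$ ($k$ even) must be used, and it is the bulk of the paper's proof (the four cases according to the sign of $i_1$ and of $r$, each compared against $P_1$ or $P_2$). Your argument never invokes the bound on $n$ at all, which is the tell-tale sign of the gap: without that bound the conclusion is not safe.

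To see that winding paths genuinely compete when $n$ is small relative to $k$, take $GP(20,9)$ and $j=6$: the best of the four types is $P_2$ with length $5$, but the one-spoke path $u_0u_{19}v_{19}v_{8}v_{17}v_{6}$ also has length $5$ with its spoke at position $-1\notin\{j_0,\,j_0-k,\,-j_1,\,k-j_1\}=\{6,-3,-5,4\}$; for ratios $n/k$ closer to $2$ such winding paths can strictly beat all four types. So your stage-two "routine arithmetic'' must be supplemented by the quantitative comparison (as in the paper: for a spoke at $t$ with $0\le|t|<k$ and winding number $r$, bound $LEN$ from below by $LEN(P_1)$ or $LEN(P_2)$ using $n\ge\frac{k(k+1)}{2}$ resp.\ $\frac{k^2}{2}$), and also a word on why, within a fixed no-wrap progression, positions beyond the target ($s>j$, say) cannot help. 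As written, the classification step is incomplete.
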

\begin{proof}
For the convenience of computing the distance of the path in $GP(n,k)$, we divide the direction of the path
into $positive$ and $negative$ direction, and use different vertex subscripts marking method according to the direction of the path.

Suppose that there is a path from $u_{i_1}$ via outer edges.
If the path from $u_{i_1}$ is of $positive$ direction, then the path is denoted by $u_{i_1}u_{i_1+1}u_{i_1+2}\cdots$.
If the path from $u_{i_1}$ is of $negative$ direction, then the path is denoted by $u_{i_1}u_{i_1-1}u_{i_1-2}\cdots$.
Using the above vertex subscripts marking method, the distance of a $u_{i_1},u_{i_2}$-path (via both directions)
via outer edges is $|i_2-i_1|$.

Similarly, suppose that there is a path from $v_{i_1}$ via inner edges.
If the path from $v_{i_1}$ is of $positive$ direction, then the path is denoted by $v_{i_1}v_{i_1+k}v_{i_1+2k}\cdots$.
If the path from $v_{i_1}$ is of $negative$ direction, then the path is denoted by $v_{i_1}v_{i_1-k}v_{i_1-2k}\cdots$.
Using the above vertex subscripts marking method, the distance of a $v_{i_1},v_{i_2}$-path (via both directions)
via inner edges is $|\frac{i_2-i_1}{k}|$.

Whenever considering the path that connects $u_{i_1}$ to $u_{i_2}$ via outer edges,
it is always negative if $i_2 < i_1$, and positive otherwise. Same for the path that
connects $v_{i_2}$ to $v_{i_3}$ via inner edges.

{\bf Claim 1.} A $u_0, v_j$-path of minimum length contains only one spoke.

Let $J^{}=j+rn$ where $r$ is an integer. Note that $v_{J}=v_j$.

Note that a $u_0, v_J$-path cannot contain even number of spokes.
Let $P^{(1)}$ be a $u_0, v_J$-path containing $3$ spokes.
Suppose that $P^{(1)}$ connects $u_0$ and $u_{i_1}$ via outer edges, then spoke $u_{i_1}v_{i_1}$,
then connects $v_{i_1}$ and $v_{i_2}$ via inner edges, then spoke $v_{i_2}u_{i_2}$,
then connects $u_{i_2}$ and $u_{i_3}$ via outer edges, then spoke $u_{i_3}v_{i_3}$, and then connects
$v_{i_3}$ and $v_J$ via inner edges.

Let $P^{(2)}$ be the $u_0, v_J$-path that connects $u_0$ and $u_{i_1+i_3-i_2}$ via outer edges,
then spoke $u_{i_1+i_3-i_2}v_{i_1+i_3-i_2}$, and then connects
$v_{i_1+i_3-i_2}$ and $v_J$ via inner edges.

Let $LEN(P)$ be the length of path $P$. Then
\begin{align*}
LEN(P^{(1)})&=(|i_1|+1+|\frac{i_2-i_1}{k}|+1+|i_3-i_2|+1+|\frac{J-i_3}{k}|),\\
LEN(P^{(2)})&=(|i_1+i_3-i_2|+1+|\frac{J-i_1-i_3+i_2}{k}|).
\end{align*}
Because $|a+b|\le |a|+|b|$ for two integers, $|i_1+i_3-i_2|\le|i_1|+|i_3-i_2|$
and $|\frac{J-i_1-i_3+i_2}{k}|\le |\frac{J-i_3}{k}|+|\frac{i_2-i_1}{k}|$.
We get $LEN(P^{(1)})-LEN(P^{(2)})\ge 2$.
So $P^{(1)}$ is a $u_0, v_j$-path but not of minimum length.

If a $u_0, v_J$-path contains $5$ or more than $5$ spokes, similar transformation like above can give
a new $u_0, v_J$-path which has smaller spokes and smaller length than the original $u_0, v_J$-path.

{\bf Claim 2.} A $u_0, v_j$-path of minimum length belongs to one of the four types $\{P_1,P_2,P_3,P_4\}$.

Let $P^{(3)}$ be a $u_0, v_J$-path with one spoke.
Suppose that $P^{(3)}$ connects $u_0$ and $u_{i_1}$
via outer edges, then spoke $u_{i_1}v_{i_1}$, and then connects $v_{i_1}$ and $v_J$ via inner edges.

Firstly we prove that $P^{(3)}$ is not a minimum $u_0, v_j$-path if $|i_1|\ge k$.
Suppose $|i_1|\ge k$.
Let $i_1=sk+t$ such that $s\ge 1$ and $0\le t<k$ when $i_1\ge k$, and $s\le -1$ and $-k<t\le 0$ when $i_1\le -k$.

Let $P^{(4)}$ be the $u_0, v_J$-path which connects $u_0$ and $u_t$ via outer edges
(with the same direction as the $u_0,u_{i_1}$-path via outer edges in $P^{(3)}$), then spoke $u_tv_t$, and then
connects $v_t$ and $v_J$ via inner edges (with the same direction as the $v_{i_1},v_J$-path via inner edges in $P^{(3)}$).


We discuss the following four cases.

(1) In $P^{(3)}$, the $u_0,u_{i_1}$-path via outer edges is of positive direction
and the $v_{i_1},v_J$-path via inner edges is of positive direction.

Note that
\begin{align*}
LEN(P^{(3)})&=|i_1|+1+|\frac{J-i_1}{k}|=i_1+1+\frac{J-i_1}{k}, \\
LEN(P^{(4)})&=|t|+1+|\frac{J-t}{k}|=t+1+\frac{J-t}{k}.
\end{align*}

$LEN(P^{(3)})-LEN(P^{(4)})=s(k-1)>0$.
So $P^{(3)}$ is a $u_0, v_j$-path but not of minimum length.

(2) In $P^{(3)}$, the $u_0,u_{i_1}$-path via outer edges is of positive direction
and the $v_{i_1},v_J$-path via inner edges is of negative direction.

Note that
\begin{align*}
LEN(P^{(3)})&=|i_1|+1+|\frac{J-i_1}{k}|=i_1+1+\frac{i_1-J}{k}, \\
LEN(P^{(4)})&=|t|+1+|\frac{J-t}{k}|=t+1+\frac{t-J}{k}.
\end{align*}

$LEN(P^{(3)})-LEN(P^{(4)})=s(k+1)>0$.
So $P^{(3)}$ is a $u_0, v_j$-path but not of minimum length.

(3) In $P^{(3)}$, the $u_0,u_{i_1}$-path via outer edges is of negative direction
and the $v_{i_1},v_J$-path via inner edges is of positive direction.

Note that
\begin{align*}
LEN(P^{(3)})&=|i_1|+1+|\frac{J-i_1}{k}|=-i_1+1+\frac{J-i_1}{k}, \\
LEN(P^{(4)})&=|t|+1+|\frac{J-t}{k}|=-t+1+\frac{J-t}{k}.
\end{align*}

$LEN(P^{(3)})-LEN(P^{(4)})=-s(k+1)>0$.
So $P^{(3)}$ is a $u_0, v_j$-path but not of minimum length.

(4) In $P^{(3)}$, the $u_0,u_{i_1}$-path via outer edges is of negative direction
and the $v_{i_1},v_J$-path via inner edges is of negative direction.

Note that
\begin{align*}
LEN(P^{(3)})&=|i_1|+1+|\frac{J-i_1}{k}|=-i_1+1+\frac{i_1-J}{k}, \\
LEN(P^{(4)})&=|t|+1+|\frac{J-t}{k}|=-t+1+\frac{t-J}{k}.
\end{align*}

$LEN(P^{(3)})-LEN(P^{(4)})=-s(k-1)>0$.
So $P^{(3)}$ is a $u_0, v_j$-path but not of minimum length.

Secondly we prove that $P^{(3)}$ is not a minimum $u_0, v_j$-path if $J>n$ or $J<-n$ (that is $r\neq 0,-1$).
Suppose that $|i_1|<k$ and $r\neq 0,-1$.
We discuss the following four cases.

(1) $0\le i_1<k$ and $r\ge 1$.

In this case, the $u_0,u_{i_1}$-path via outer edges is of positive direction and the $v_{i_1},v_{J}$-path
via inner edges is of positive direction. Note that\\
$LEN(P^{(3)})=|i_1|+1+|\frac{j+rn-i_1}{k}|=i_1+1+\frac{j+rn-i_1}{k}$.

(1.1) When $k$ is odd and $j_0\le\frac{k+1}{2}$, or $k$ is even and $j_0\le\frac{k}{2}$.

We compare $LEN(P^{(3)})$ with $LEN(P_1)$.

If $i_1\ge j_0$, $LEN(P^{(3)})-LEN(P_1)=(i_1+1+\frac{j+rn-i_1}{k})-(j_0+1+\frac{j-j_0}{k})=\frac{rn-(i_1-j_0)}{k}+(i_1-j_0)>0$.

If $i_1<j_0$, $1\le j_0-i_1\le\frac{k+1}{2}$ when $k$ is odd (or $1\le j_0-i_0\le\frac{k}{2}$ when $k$ is even).
Recall that $n\ge \frac{k(k+1)}{2}$ when $k$ is odd (or $n\ge \frac{k^2}{2}$ when $k$ is even). Then
\begin{align*}
LEN(P^{(3)})-LEN(P_1)&=\frac{rn+j_0-i_1}{k}-(j_0-i_1) \\
&>\frac{rk(k+1)/2}{k}-\frac{k+1}{2}\\
&=\frac{(r-1)(k+1)}{2}\ge 0
\end{align*}
when $k$ is odd and
\begin{align*}
LEN(P^{(3)})-LEN(P_1)&=\frac{rn+j_0-i_1}{k}-(j_0-i_1)\\
&>\frac{rk^2/2}{k}-\frac{k}{2}\\
&=\frac{(r-1)k}{2}\ge 0
\end{align*}
when $k$ is even.
So $P^{(3)}$ is a $u_0, v_j$-path but not of minimum length.

(1.2) When $k$ is odd and $j_0>\frac{k+1}{2}$, or $k$ is even and $j_0>\frac{k}{2}$.

We compare $LEN(P^{(3)})$ with $LEN(P_2)$.

If $i_1\ge j_0$, $i_1>k-j_0$ and so $i_1+j_0-k>0$. Then
$LEN(P^{(3)})-LEN(P_2)=(i_1+1+\frac{j+rn-i_1}{k})-(k-j_0+2+\frac{j-j_0}{k})=\frac{rn-(i_1-j_0)}{k}+(i_1+j_0-k-1)>0$.

If $i_1<j_0$, $j_0-i_1\ge 1$.
Recall that $n\ge \frac{k(k+1)}{2}$ when $k$ is odd (or $n\ge \frac{k^2}{2}$ when $k$ is even). Then
\begin{align*}
LEN(P^{(3)})-LEN(P_2)&=\frac{rn+j_0-i_1}{k}-(k+1-i_1-j_0) \\
&>\frac{rk(k+1)/2}{k}-\frac{k+1}{2}\\
&=\frac{(r-1)(k+1)}{2}\ge 0
\end{align*}
when $k$ is odd and
\begin{align*}
LEN(P^{(3)})-LEN(P_2)&=\frac{rn+j_0-i_1}{k}-(k+1-i_1-j_0)\\
&>\frac{rk^2/2}{k}-\frac{k}{2}\\
&=\frac{(r-1)k}{2}\ge 0
\end{align*}
when $k$ is even.
So $P^{(3)}$ is a $u_0, v_j$-path but not of minimum length.

(2) $0\le i_1<k$ and $r\le -2$.

In this case, the $u_0,u_{i_1}$-path via outer edges is of positive direction and the $v_{i_1},v_{J}$-path
via inner edges is of negative direction. Note that\\
$LEN(P^{(3)})=|i_1|+1+|\frac{j+rn-i_1}{k}|=i_1+1+\frac{i_1-(j+rn)}{k}$.

(2.1) When $k$ is odd and $j_0\le\frac{k+1}{2}$, or $k$ is even and $j_0\le\frac{k}{2}$.

We compare $LEN(P^{(3)})$ with $LEN(P_1)$.

If $i_1\ge j_0$, $LEN(P^{(3)})-LEN(P_1)=(i_1+1+\frac{i_1-(j+rn)}{k})-(j_0+1+\frac{j-j_0}{k})=\frac{i_1+j_0-rn-2j}{k}+(i_1-j_0)>0$.

If $i_1<j_0$, $1\le j_0-i_1\le\frac{k+1}{2}$ when $k$ is odd (or $1\le j_0-i_0\le\frac{k}{2}$ when $k$ is even).
Recall that $n\ge \frac{k(k+1)}{2}$ when $k$ is odd (or $n\ge \frac{k^2}{2}$ when $k$ is even). Then
\begin{align*}
LEN(P^{(3)})-LEN(P_1)&=\frac{i_1+j_0-rn-2j}{k}-(j_0-i_1) \\
&\ge \frac{i_1+j_0-rn-n}{k}-\frac{k+1}{2}\\
&>\frac{(-r-1)k(k+1)/2}{k}-\frac{k+1}{2}\\
&=\frac{(-r-2)(k+1)}{2}\ge 0
\end{align*}
when $k$ is odd and
\begin{align*}
LEN(P^{(3)})-LEN(P_1)&=\frac{i_1+j_0-rn-2j}{k}-(j_0-i_1) \\
&\ge \frac{i_1+j_0-rn-n}{k}-\frac{k}{2}\\
&>\frac{(-r-1)k^2/2}{k}-\frac{k}{2}\\
&=\frac{(-r-2)k}{2}\ge 0
\end{align*}
when $k$ is even.
So $P^{(3)}$ is a $u_0, v_j$-path but not of minimum length.

(2.2) When $k$ is odd and $j_0>\frac{k+1}{2}$, or $k$ is even and $j_0>\frac{k}{2}$.

We compare $LEN(P^{(3)})$ with $LEN(P_2)$.

If $i_1\ge j_0$, $i_1>k-j_0$ and so $i_1+j_0-k>0$. Then
$LEN(P^{(3)})-LEN(P_2)=(i_1+1+\frac{i_1-(j+rn)}{k})-(k-j_0+2+\frac{j-j_0}{k})=\frac{i_1+j_0-rn-2j}{k}+(i_1+j_0-k-1)>0$.

If $i_1<j_0$, $j_0-i_1\ge 1$.
Recall that $n\ge \frac{k(k+1)}{2}$ when $k$ is odd (or $n\ge \frac{k^2}{2}$ when $k$ is even). Then
\begin{align*}
LEN(P^{(3)})-LEN(P_2)&=\frac{i_1+j_0-rn-2j}{k}-(k+1-i_1-j_0) \\
&>\frac{i_1+j_0-rn-n}{k}-\frac{k+1}{2}\\
&>\frac{(-r-1)k(k+1)/2}{k}-\frac{k+1}{2}\\
&=\frac{(-r-2)(k+1)}{2}\ge 0
\end{align*}
when $k$ is odd and
\begin{align*}
LEN(P^{(3)})-LEN(P_2)&=\frac{i_1+j_0-rn-2j}{k}-(k+1-i_1-j_0) \\
&\ge\frac{i_1+j_0-rn-n}{k}-\frac{k}{2}\\
&>\frac{(-r-1)k^2/2}{k}-\frac{k}{2}\\
&=\frac{(-r-2)k}{2}\ge 0
\end{align*}
when $k$ is even.
So $P^{(3)}$ is a $u_0, v_j$-path but not of minimum length.

(3) $-k<i_1\le 0$ and $r\ge 1$.

In this case, the $u_0,u_{i_1}$-path via outer edges is of negative direction and the $v_{i_1},v_{J}$-path
via inner edges is of positive direction. Note that\\
$LEN(P^{(3)})=|i_1|+1+|\frac{j+rn-i_1}{k}|=-i_1+1+\frac{j+rn-i_1}{k}$.

(3.1) When $k$ is odd and $j_0\le\frac{k+1}{2}$, or $k$ is even and $j_0\le\frac{k}{2}$.

We compare $LEN(P^{(3)})$ with $LEN(P_1)$.

If $-i_1\ge j_0$, $LEN(P^{(3)})-LEN(P_1)=(-i_1+1+\frac{j+rn-i_1}{k})-(j_0+1+\frac{j-j_0}{k})=\frac{rn-i_1+j_0}{k}+(-i_1-j_0)>0$.

If $-i_1<j_0$, $1\le j_0+i_1\le\frac{k+1}{2}$ when $k$ is odd (or $1\le j_0+i_0\le\frac{k}{2}$ when $k$ is even).
Recall that $n\ge \frac{k(k+1)}{2}$ when $k$ is odd (or $n\ge \frac{k^2}{2}$ when $k$ is even). Then
\begin{align*}
LEN(P^{(3)})-LEN(P_1)&=\frac{rn+j_0-i_1}{k}-(j_0+i_1) \\
&>\frac{rk(k+1)/2}{k}-\frac{k+1}{2}\\
&=\frac{(r-1)(k+1)}{2}\ge 0
\end{align*}
when $k$ is odd and
\begin{align*}
LEN(P^{(3)})-LEN(P_1)&=\frac{rn+j_0-i_1}{k}-(j_0+i_1)\\
&>\frac{rk^2/2}{k}-\frac{k}{2}\\
&=\frac{(r-1)k}{2}\ge 0
\end{align*}
when $k$ is even.
So $P^{(3)}$ is a $u_0, v_j$-path but not of minimum length.

(3.2) When $k$ is odd and $j_0>\frac{k+1}{2}$, or $k$ is even and $j_0>\frac{k}{2}$.

We compare $LEN(P^{(3)})$ with $LEN(P_2)$.

If $-i_1\ge j_0$, $-i_1>k-j_0$ and so $-i_1+j_0-k>0$. Then
$LEN(P^{(3)})-LEN(P_2)=(-i_1+1+\frac{j+rn-i_1}{k})-(k-j_0+2+\frac{j-j_0}{k})=\frac{rn-i_1+j_0}{k}+(-i_1+j_0-k-1)>0$.

If $-i_1<j_0$, $j_0+i_1\ge 1$.
Recall that $n\ge \frac{k(k+1)}{2}$ when $k$ is odd (or $n\ge \frac{k^2}{2}$ when $k$ is even). Then
\begin{align*}
LEN(P^{(3)})-LEN(P_2)&=\frac{rn+j_0-i_1}{k}-(k+1+i_1-j_0) \\
&>\frac{rk(k+1)/2}{k}-\frac{k+1}{2}\\
&=\frac{(r-1)(k+1)}{2}\ge 0
\end{align*}
when $k$ is odd and
\begin{align*}
LEN(P^{(3)})-LEN(P_2)&=\frac{rn+j_0-i_1}{k}-(k+1+i_1-j_0)\\
&>\frac{rk^2/2}{k}-\frac{k}{2}\\
&=\frac{(r-1)k}{2}\ge 0
\end{align*}
when $k$ is even.
So $P^{(3)}$ is a $u_0, v_j$-path but not of minimum length.

(4) $-k<i_1\le 0$ and $r\le -2$.

In this case, the $u_0,u_{i_1}$-path via outer edges is of negative direction and the $v_{i_1},v_{J}$-path
via inner edges is of negative direction. Note that\\
$LEN(P^{(3)})=|i_1|+1+|\frac{j+rn-i_1}{k}|=-i_1+1+\frac{i_1-(j+rn)}{k}$.

(4.1) When $k$ is odd and $j_0\le\frac{k+1}{2}$, or $k$ is even and $j_0\le\frac{k}{2}$.

We compare $LEN(P^{(3)})$ with $LEN(P_1)$.

If $-i_1\ge j_0$, $LEN(P^{(3)})-LEN(P_1)=(-i_1+1+\frac{i_1-(j+rn)}{k})-(j_0+1+\frac{j-j_0}{k})=\frac{i_1+j_0-rn-2j}{k}+(-i_1-j_0)>0$.

If $-i_1<j_0$, $1\le j_0+i_1\le\frac{k+1}{2}$ when $k$ is odd (or $1\le j_0+i_0\le\frac{k}{2}$ when $k$ is even).
Recall that $n\ge \frac{k(k+1)}{2}$ when $k$ is odd (or $n\ge \frac{k^2}{2}$ when $k$ is even). Then
\begin{align*}
LEN(P^{(3)})-LEN(P_1)&=\frac{i_1+j_0-rn-2j}{k}-(j_0+i_1) \\
&\ge \frac{i_1+j_0-rn-n}{k}-\frac{k+1}{2}\\
&>\frac{(-r-1)k(k+1)/2}{k}-\frac{k+1}{2}\\
&=\frac{(-r-2)(k+1)}{2}\ge 0
\end{align*}
when $k$ is odd and
\begin{align*}
LEN(P^{(3)})-LEN(P_1)&=\frac{i_1+j_0-rn-2j}{k}-(j_0+i_1) \\
&\ge \frac{i_1+j_0-rn-n}{k}-\frac{k}{2}\\
&>\frac{(-r-1)k^2/2}{k}-\frac{k}{2}\\
&=\frac{(-r-2)k}{2}\ge 0
\end{align*}
when $k$ is even.
So $P^{(3)}$ is a $u_0, v_j$-path but not of minimum length.

(4.2) When $k$ is odd and $j_0>\frac{k+1}{2}$, or $k$ is even and $j_0>\frac{k}{2}$.

We compare $LEN(P^{(3)})$ with $LEN(P_2)$.

If $-i_1\ge j_0$, $-i_1>k-j_0$ and so $-i_1+j_0-k>0$. Then
$LEN(P^{(3)})-LEN(P_2)=(-i_1+1+\frac{i_1-(j+rn)}{k})-(k-j_0+2+\frac{j-j_0}{k})=\frac{i_1+j_0-rn-2j}{k}+(-i_1+j_0-k-1)>0$.

If $-i_1<j_0$, $j_0+i_1\ge 1$.
Recall that $n\ge \frac{k(k+1)}{2}$ when $k$ is odd (or $n\ge \frac{k^2}{2}$ when $k$ is even). Then
\begin{align*}
LEN(P^{(3)})-LEN(P_2)&=\frac{i_1+j_0-rn-2j}{k}-(k+1+i_1-j_0) \\
&>\frac{i_1+j_0-rn-n}{k}-\frac{k+1}{2}\\
&>\frac{(-r-1)k(k+1)/2}{k}-\frac{k+1}{2}\\
&=\frac{(-r-2)(k+1)}{2}\ge 0
\end{align*}
when $k$ is odd and
\begin{align*}
LEN(P^{(3)})-LEN(P_2)&=\frac{i_1+j_0-rn-2j}{k}-(k+1+i_1-j_0) \\
&\ge\frac{i_1+j_0-rn-n}{k}-\frac{k}{2}\\
&>\frac{(-r-1)k^2/2}{k}-\frac{k}{2}\\
&=\frac{(-r-2)k}{2}\ge 0
\end{align*}
when $k$ is even.
So $P^{(3)}$ is a $u_0, v_j$-path but not of minimum length.

The proof of the lemma completes.

\end{proof}

Let $d_{12}(u_0,v_j)$ be the distance between $u_0$ and $v_j$ in $GP(n,k)$ via paths of type $P_1$ or $P_2$.
Let $d_{34}(u_0,v_j)$ be the distance between $u_0$ and $v_j$ in $GP(n,k)$ via paths of type $P_3$ or $P_4$.
Then
$$d(u_0,v_j) = \min\{d_{12}(u_0,v_j),d_{34}(u_0,v_j)\}$$
and
$$d(u_0,v_{j^*}) = \max\{d(u_0,v_j):\ 0\le j\le n/2\}.$$

We first find $j^1$ such that $d_{12}(u_0,v_{j^1}) = \max\{d_{12}(u_0,v_j):\ 0\le j\le n/2\}$.
If $d_{34}(u_0,v_{j^1})\ge d_{12}(u_0,v_{j^1})$, then $j^*=j^1$.
If $d_{34}(u_0,v_{j^1})< d_{12}(u_0,v_{j^1})$, we can find $j^*$ around $j^1$ such that
$|d_{12}(u_0,v_{j^*})-d_{34}(u_0,v_{j^*})|\le 1$ and $\min\{d_{12}(u_0,v_{j^*}),d_{34}(u_0,v_{j^*})\}$ is as large as possible.
Note that $j^*$ is not unique.

The following discussions are organized using a tree with depth $3$.
At depth $1$, the discussions are according to the parity of $k$.
At depth $2$, the discussions are according to the parity of $n$.
At depth $3$, the discussions are according to parameters contained in the small cases.

\medskip\noindent
{\bf Case 1}: $k$ is odd.\\
Notice that $n\ge 3k-1$ in this case. We will prove that there exist a $j^*$ such that
$d(u_0,v_{j^*}) = \max\{d(u_0,v_j):\ 0\le j\le n/2\}$ and $k<j^*\le n/2$.

\medskip\noindent
{\bf Case 1.1}: $n$ is even.\\
Suppose $n/2=m_2k+j_2$ where $0\le j_2<k$.
From $n\ge 3k-1$, we know that $m_2\ge 2$, or $m_2=1$ and $j_2\ge \frac{k-1}{2}$.

\medskip\noindent
{\bf Case 1.1.1}: $j_2\ge \frac{k-1}{2}$.

If $0\le j\le \frac{k+1}{2}$, then $d_{12}(u_0,v_{m_2k+j})=m_2+1+j$.
If $\frac{k+1}{2}< j\le j_2$, then $d_{12}(u_0,v_{m_2k+j})=m_2+k+2-j$.
Observe that $d_{12}(u_0,v_{(m_2-1)k+\frac{k+1}{2}})=m_2+\frac{k+1}{2}$.
Because $j_2\ge \frac{k-1}{2}$, we infer that $m_2+1+j\ge m_2+\frac{k+1}{2}$ when $j=\frac{k-1}{2}$ or $j=\frac{k+1}{2}$.
So we just need to consider the distance between $u_0$ and $v_{m_2k+j}$, where $0\le j\le j_2$.
Note that $d_{12}(u_0,v_{m_2k+\frac{k+1}{2}})=m_2+\frac{k+1}{2}+1$,
$d_{12}(u_0,v_{m_2k+\frac{k+1}{2}-1})=d_{12}(u_0,v_{m_2k+\frac{k+1}{2}+1})=m_2+\frac{k+1}{2}$, and so on.

Note that $v_{-(m_2+1)k}=v_{n-(m_2+1)k}=v_{m_2k+2j_2-k}$. Observe that
$d_{34}(u_0,v_{m_2k+2j_2-k})=m_2+2$,
$d_{34}(u_0,v_{m_2k+2j_2-k+1})=d_{34}(u_0,v_{m_2k+2j_2-k-1})=m_2+3$, and so on.

If $j_2=\frac{k-1}{2}$, then $j^1=m_2k+\frac{k-1}{2}$. Because $d_{34}(u_0,v_{j^1})\ge d_{12}(u_0,v_{j^1})$, we get $j^*=j^1=m_2k+\frac{k-1}{2}$.

If $j_2=\frac{k+1}{2}$, then $j^1=m_2k+\frac{k+1}{2}$. Because $d_{34}(u_0,v_{j^1})\ge d_{12}(u_0,v_{j^1})$, we get $j^*=j^1=m_2k+\frac{k+1}{2}$.

If $3\le 2j_2-k\le \frac{k+1}{2}$, then $j^1=m_2k+\frac{k+1}{2}$ and $d_{34}(u_0,v_{j^1})<d_{12}(u_0,v_{j^1})$. We get $j^*=m_2k+j_2$.

If $2j_2-k> \frac{k+1}{2}$, then $j^1=m_2k+\frac{k+1}{2}$ and $d_{34}(u_0,v_{j^1})<d_{12}(u_0,v_{j^1})$.
We get $j^*=m_2k+j_2-\frac{k-1}{2}$.

Table~\ref{T:case1} shows that how to find $j^*$ in $GP(50,9)$.

\begin{table}
\begin{center}
\begin{tabular}{|c|c|c|c|c|c|c|c|}
\hline
$v_j$ & $v_{20}$ & $v_{21}$  & $v_{22}$ &$v_{23}$ &$v_{24}$ &$v_{25}$     \\
\hline
$d_{12}(u_0,v_j)$ & 5 & 6 & 7 & 8 & 7 & 6   \\
\hline
$d_{34}(u_0,v_j)$ & 7 & 6 & 5 & 4 & 5 & 6   \\
\hline
\end{tabular}
\end{center}
\centerline{}  \caption{\small In $GP(50,9)$, the search of $j^*$. Because $j^1=23$, $d_{12}(u_0,v_{23})=8$,
$n-(m_2+1)k=23$ and $d_{34}(u_0,v_{23})=4$, so $j^*=21$ or $j^*=25$. }
\label{T:case1}
\end{table}

\medskip\noindent
{\bf Case 1.1.2}: $j_2<\frac{k-1}{2}$.

If $0\le j\le j_2$, then $d_{12}(u_0,v_{m_2k+j})=m_2+1+j$.
Note that $d_{12}(u_0,v_{(m_2-1)k+\frac{k+1}{2}})=m_2+\frac{k+1}{2}$.
Because $j_2<\frac{k-1}{2}$, we have $m_2+1+j<m_2+\frac{k+1}{2}$ when $0\le j\le j_2$.
So we just need to consider the distance between $u_0$ and $v_{(m_2-1)k+j}$, where $0\le j\le k$.
Note that $d_{12}(u_0,v_{(m_2-1)k+\frac{k+1}{2}})=m_2+\frac{k+1}{2}$,
$d_{12}(u_0,v_{(m_2-1)k+\frac{k+1}{2}-1})=d_{12}(u_0,v_{(m_2-1)k+\frac{k+1}{2}+1})=m_2+\frac{k+1}{2}-1$,  and so on.


Note that $v_{-(m_2+1)k}=v_{n-(m_2+1)k}=v_{(m_2-1)k+2j_2}$ and $2j_2< k-1$.
Moreover, $d_{34}(u_0,v_{(m_2-1)k+2j_2})=m_2+2$,
$d_{34}(u_0,v_{(m_2-1)k+2j_2+1})=d_{34}(u_0,v_{(m_2-1)k+2j_2-1})=m_2+3$, and so on.

If $j_2=0$ or $j_2=1$, we set $j^1=(m_2-1)k+\frac{k+1}{2}$.
Because $d_{34}(u_0,v_{j^1})\ge d_{12}(u_0,v_{j^1})$, we have $j^*=j^1=(m_2-1)k+\frac{k+1}{2}$.

If $4\le 2j_2\le\frac{k+1}{2}$, we have $j^1=(m_2-1)k+\frac{k+1}{2}$ and $d_{34}(u_0,v_{j^1})<d_{12}(u_0,v_{j^1})$.
We get $j^*=(m_2-1)k+\frac{k+1}{2}+j_2-1$.

If $2j_2>\frac{k+1}{2}$, then $j^1=(m_2-1)k+\frac{k+1}{2}$ and $d_{34}(u_0,v_{j^1})<d_{12}(u_0,v_{j^1})$.
We get $j^*=(m_2-1)k+j_2+1$.

Table~\ref{T:case2} shows that how to find $j^*$ in $GP(42,9)$.

\begin{table}
\begin{center}
\begin{tabular}{|c|c|c|c|c|c|c|c|}
\hline
$v_j$ & $v_{12}$ & $v_{13}$  & $v_{14}$ &$v_{15}$ &$v_{16}$ &$v_{17}$     \\
\hline
$d_{12}(u_0,v_j)$ & 5 & 6 & 7 & 6 & 5 & 4   \\
\hline
$d_{34}(u_0,v_j)$ & 7 & 6 & 5 & 4 & 5 & 6   \\
\hline
\end{tabular}
\end{center}
\centerline{}  \caption{\small In $GP(42,9)$, the search of $j^*$. Because $j^1=14$, $d_{12}(u_0,v_{14})=7$,
$n-(m_2+1)k=15$ and $d_{34}(u_0,v_{15})=4$, so $j^*=13$. }
\label{T:case2}
\end{table}

\medskip\noindent
{\bf Case 1.2}: $n$ is odd.\\
Suppose $(n-1)/2=m_3k+j_3$ where $0\le j_3<k$.
From $n\ge 3k-1$, we know that $m_3\ge 2$, or $m_3=1$ and $j_3\ge \frac{k-2}{2}$.

\medskip\noindent
{\bf Case 1.2.1}: $j_3\ge \frac{k-2}{2}$.

Because $k$ is odd, $\frac{k-2}{2}$ is not an integer and hence $j_3\ge\frac{k-1}{2}$.
It suffices to consider $d(u_0, v_{m_3k+j})$, where $0\le j\le j_3$.
Note that $d_{12}(u_0,v_{m_3k+\frac{k+1}{2}})=m_3+\frac{k+1}{2}+1$,
$d_{12}(u_0,v_{m_3k+\frac{k+1}{2}-1})=d_{12}(u_0,v_{m_3k+\frac{k+1}{2}+1})=m_3+\frac{k+1}{2}$, and so on.

Note that $v_{-(m_3+1)k}=v_{n-(m_3+1)k}=v_{m_3k+2j_3+1-k}$. Moreover, $d_{34}(u_0,v_{m_3k+2j_3+1-k})=m_3+2$,
$d_{34}(u_0,v_{m_3k+2j_3+1-k+1})=d_{34}(u_0,v_{m_3k+2j_3+1-k-1})=m_3+3$,
$d_{34}(u_0,v_{m_3k+2j_3+1-k+2})=d_{34}(u_0,v_{m_3k+2j_3+1-k-2})=m_3+4$, and so on.

If $j_3=\frac{k-1}{2}$, then select $j^1=m_3k+\frac{k-1}{2}$.
Because $d_{34}(u_0,v_{j^1})\ge d_{12}(u_0,v_{j^1})$, we have $j^*=j^1=m_3k+\frac{k-1}{2}$.

If $2\le 2j_3+1-k\le\frac{k+1}{2}$, then $j^1=m_3k+\frac{k+1}{2}$ and $d_{34}(u_0,v_{j^1})<d_{12}(u_0,v_{j^1})$.
We get $j^*=m_3k+j_3$.

If $2j_3+1-k>\frac{k+1}{2}$, then $j^1=m_3k+\frac{k+1}{2}$ and $d_{34}(u_0,v_{j^1})<d_{12}(u_0,v_{j^1})$.
We get $j^*=m_3k+j_3-\frac{k-3}{2}$ or $j^*=m_3k+j_3-\frac{k-1}{2}$.

Table~\ref{T:case3} shows that how to find $j^*$ in $GP(51,9)$.

\begin{table}
\begin{center}
\begin{tabular}{|c|c|c|c|c|c|c|c|}
\hline
$v_j$ & $v_{20}$ & $v_{21}$  & $v_{22}$ &$v_{23}$ &$v_{24}$ &$v_{25}$     \\
\hline
$d_{12}(u_0,v_j)$ & 5 & 6 & 7 & 8 & 7 & 6   \\
\hline
$d_{34}(u_0,v_j)$ & 8 & 7 & 6 & 5 & 4 & 5   \\
\hline
\end{tabular}
\end{center}
\centerline{}  \caption{\small In $GP(51,9)$, the search of $j^*$. Because $j^1=23$, $d_{12}(u_0,v_{23})=8$,
$n-(m_3+1)k=24$ and $d_{34}(u_0,v_{24})=4$, so $j^*=21$ or $j^*=22$. }
\label{T:case3}
\end{table}

\medskip\noindent
{\bf Case 1.2.2}: $j_3< \frac{k-2}{2}$.

It suffices to consider the distances $d(u_0, v_{(m_3-1)k+j})$, where $0\le j\le k$. We infer that $d_{12}(u_0,v_{(m_3-1)k+\frac{k+1}{2}})=m_3+\frac{k+1}{2}$,
$d_{12}(u_0,v_{(m_3-1)k+\frac{k+1}{2}-1})=d_{12}(u_0,v_{(m_3-1)k+\frac{k+1}{2}+1})=m_3+\frac{k+1}{2}-1$, and so on.

Note that $v_{-(m_3+1)k}=v_{n-(m_3+1)k}=v_{(m_3-1)k+2j_3+1}$ and $2j_3+1< k-1$.
Moreover,  $d_{34}(u_0,v_{(m_3-1)k+2j_3+1})=m_3+2$,
$d_{34}(u_0,v_{(m_3-1)k+2j_3+1+1})=d_{34}(u_0,v_{(m_3-1)k+2j_3+1-1})=m_3+3$, and so on.

If $j_3=0$, then let $j^1=(m_3-1)k+\frac{k+1}{2}$.
Because $d_{34}(u_0,v_{j^1})\ge d_{12}(u_0,v_{j^1})$, we have $j^*=j^1=(m_3-1)k+\frac{k+1}{2}$.

If $3\le 2j_3+1\le\frac{k+1}{2}$, then $j^1=(m_3-1)k+\frac{k+1}{2}$ and $d_{34}(u_0,v_{j^1})<d_{12}(u_0,v_{j^1})$.
We get $j^*=(m_3-1)k+\frac{k+1}{2}+j_3-1$ or $j^*=(m_3-1)k+\frac{k+1}{2}+j_3$.

If $2j_3+1>\frac{k+1}{2}$, then $j^1=(m_3-1)k+\frac{k+1}{2}$ and $d_{34}(u_0,v_{j^1})<d_{12}(u_0,v_{j^1})$.
We get $j^*=(m_3-1)k+j_3+2$ or $j^*=(m_3-1)k+j_3+1$.

Table~\ref{T:case4} shows that how to find $j^*$ in $GP(43,9)$.

\begin{table}
\begin{center}
\begin{tabular}{|c|c|c|c|c|c|c|c|}
\hline
$v_j$ & $v_{12}$ & $v_{13}$  & $v_{14}$ &$v_{15}$ &$v_{16}$ &$v_{17}$     \\
\hline
$d_{12}(u_0,v_j)$ & 5 & 6 & 7 & 6 & 5 & 4   \\
\hline
$d_{34}(u_0,v_j)$ & 8 & 7 & 6 & 5 & 4 & 5   \\
\hline
\end{tabular}
\end{center}
\centerline{}  \caption{\small In $GP(43,9)$, the search of $j^*$. Because $j^1=14$, $d_{12}(u_0,v_{14})=7$,
$n-(m_3+1)k=16$ and $d_{34}(u_0,v_{16})=4$, so $j^*=13$ or $j^*=14$. }
\label{T:case4}
\end{table}

\medskip\noindent
{\bf Case 2}: $k$ is even.\\
Notice that $n\ge 3k-2$ in this case. We will prove that there exists $j^*$ such that
$d(u_0,v_{j^*}) = \max\{d(u_0,v_j):\ 0\le j\le n/2\}$ and $k<j^*\le n/2$.

\medskip\noindent
{\bf Case 2.1}: $n$ is even.\\
Suppose $n/2=m_4k+j_4$ where $0\le j_4<k$.
From $n\ge 3k-2$, we know that $m_4\ge 2$, or $m_4=1$ and $j_4\ge \frac{k-2}{2}$.

\medskip\noindent
{\bf Case 2.1.1}: $j_4\ge \frac{k-2}{2}$.

It suffices to consider the distances $d(u_0, v_{m_4k+j})$, where $0\le j\le j_4$.
Note that $d_{12}(u_0,v_{m_4k+\frac{k}{2}})=d_{12}(u_0,v_{m_4k+\frac{k+2}{2}})=m_4+\frac{k}{2}+1$,
$d_{12}(u_0,v_{m_4k+\frac{k}{2}-1})=d_{12}(u_0,v_{m_4k+\frac{k+2}{2}+1})=m_4+\frac{k}{2}$, and so on.

Observe that $v_{-(m_4+1)k}=v_{n-(m_4+1)k}=v_{m_4k+2j_4-k}$. Moreover,
$d_{34}(u_0,v_{m_4k+2j_4-k})=m_4+2$,
$d_{34}(u_0,v_{m_4k+2j_4-k+1})=d_{34}(u_0,v_{m_4k+2j_4-k-1})=m_4+3$,
$d_{34}(u_0,v_{m_4k+2j_4-k+2})=d_{34}(u_0,v_{m_4k+2j_4-k-2})=m_4+4$, and so on.

If $j_4=\frac{k-2}{2}$, then $j^1=m_4k+\frac{k-2}{2}$.
Because $d_{34}(u_0,v_{j^1})\ge d_{12}(u_0,v_{j^1})$, we have $j^*=j^1=m_4k+\frac{k-2}{2}$.

If $j_4=\frac{k}{2}$, then $j^1=m_4k+\frac{k}{2}$.
Because $d_{34}(u_0,v_{j^1})\ge d_{12}(u_0,v_{j^1})$, we have $j^*=j^1=m_4k+\frac{k}{2}$.

If $2\le 2j_4-k\le\frac{k}{2}$, then $j^1=m_4k+\frac{k}{2}$ and $d_{34}(u_0,v_{j^1})<d_{12}(u_0,v_{j^1})$.
We get $j^*=m_4k+j_4$.

If $2j_4-k\ge\frac{k+2}{2}$, then $j^1=m_4k+\frac{k}{2}$ and $d_{34}(u_0,v_{j^1})<d_{12}(u_0,v_{j^1})$.
We get $j^*=m_4k+j_4-\frac{k}{2}+1$ or $j^*=m_4k+j_4-\frac{k}{2}$.

Table~\ref{T:case5} shows that how to find $j^*$ in $GP(56,10)$.

\begin{table}
\begin{center}
\begin{tabular}{|c|c|c|c|c|c|c|c|}
\hline
$v_j$ & $v_{22}$ & $v_{23}$  & $v_{24}$ &$v_{25}$ &$v_{26}$ &$v_{27}$ &$v_{28}$   \\
\hline
$d_{12}(u_0,v_j)$ & 5 & 6 & 7 & 8 & 8 & 7 & 6 \\
\hline
$d_{34}(u_0,v_j)$ & 8 & 7 & 6 & 5 & 4 & 5 & 6 \\
\hline
\end{tabular}
\end{center}
\centerline{}  \caption{\small In $GP(56,10)$, the search of $j^*$. Because $j^1=25$, $d_{12}(u_0,v_{25})=8$,
$n-(m_4+1)k=26$ and $d_{34}(u_0,v_{26})=4$, so $j^*=23$, $j^*=24$ or $j^*=28$. }
\label{T:case5}
\end{table}

\medskip\noindent
{\bf Case 2.1.2}: $j_4< \frac{k-2}{2}$.

It is enough to consider the distances $d(u_0, v_{(m_4-1)k+j})$, where $0\le j\le k$.
Note that $d_{12}(u_0,v_{(m_4-1)k+\frac{k}{2}})=d_{12}(u_0,v_{(m_4-1)k+\frac{k+2}{2}})=m_4+\frac{k}{2}$,
$d_{12}(u_0,v_{(m_4-1)k+\frac{k}{2}-1})=d_{12}(u_0,v_{(m_4-1)k+\frac{k+2}{2}+1})=m_4+\frac{k}{2}-1$, and so on.

Note that $v_{-(m_4+1)k}=v_{n-(m_4+1)k}=v_{(m_4-1)k+2j_2}$ and $2j_2< k-2$.
We also have $d_{34}(u_0,v_{(m_4-1)k+2j_2})=m_4+2$,
$d_{34}(u_0,v_{(m_4-1)k+2j_2+1})=d_{34}(u_0,v_{(m_4-1)k+2j_2-1})=m_4+3$,
$d_{34}(u_0,v_{(m_4-1)k+2j_2+2})=d_{34}(u_0,v_{(m_4-1)k+2j_2-2})=m_4+4$, and so on.

If $j_4=0$ or $j_4=1$, then $j^1=(m_4-1)k+\frac{k}{2}$.
Because $d_{34}(u_0,v_{j^1})\ge d_{12}(u_0,v_{j^1})$, we have $j^*=j^1=(m_4-1)k+\frac{k}{2}$.

If $4\le 2j_4\le\frac{k}{2}$, then $j^1=(m_4-1)k+\frac{k}{2}$ and $d_{34}(u_0,v_{j^1})<d_{12}(u_0,v_{j^1})$.
We get $j^*=(m_4-1)k+\frac{k}{2}+j_4-1$ or $j^*=(m_4-1)k+\frac{k}{2}+j_4$.

If $2j_4\ge\frac{k+2}{2}$, then $j^1=(m_4-1)k+\frac{k}{2}$ and $d_{34}(u_0,v_{j^1})<d_{12}(u_0,v_{j^1})$.
We get $j^*=(m_4-1)k+j_4+1$.

Table~\ref{T:case6} shows that how to find $j^*$ in $GP(44,10)$.

\begin{table}
\begin{center}
\begin{tabular}{|c|c|c|c|c|c|c|c|}
\hline
$v_j$ & $v_{12}$ & $v_{13}$  & $v_{14}$ &$v_{15}$ &$v_{16}$ &$v_{17}$ &$v_{18}$   \\
\hline
$d_{12}(u_0,v_j)$ & 4 & 5 & 6 & 7 & 7 & 6 & 5 \\
\hline
$d_{34}(u_0,v_j)$ & 6 & 5 & 4 & 5 & 6 & 7 & 8 \\
\hline
\end{tabular}
\end{center}
\centerline{}  \caption{\small In $GP(44,10)$, the search of $j^*$. Because $j^1=15$, $d_{12}(u_0,v_{15})=7$,
$n-(m_4+1)k=14$ and $d_{34}(u_0,v_{14})=4$, so $j^*=16$ or $j^*=17$. }
\label{T:case6}
\end{table}

\medskip\noindent
{\bf Case 2.2}: $n$ is odd. \\
Suppose $(n-1)/2=m_5k+j_5$, where $0\le j_5<k$.
From $n\ge 3k-2$, we know that $m_5\ge 2$, or $m_5=1$ and $j_5\ge \frac{k-3}{2}$.

\medskip\noindent
{\bf Case 2.2.1}: $j_5\ge \frac{k-3}{2}$.

Because $k$ is even, $\frac{k-3}{2}$ is not an integer and hence $j_5\ge\frac{k-2}{2}$. Again it suffices to consider the distances $d(u_0, v_{m_5k+j})$, where $0\le j\le j_5$.
Note that $d_{12}(u_0,v_{m_5k+\frac{k}{2}})=d_{12}(u_0,v_{m_5k+\frac{k+2}{2}})=m_5+\frac{k}{2}+1$,
$d_{12}(u_0,v_{m_5k+\frac{k}{2}-1})=d_{12}(u_0,v_{m_5k+\frac{k+2}{2}+1})=m_5+\frac{k}{2}$, and so on.

Observe that $v_{-(m_5+1)k}=v_{n-(m_5+1)k}=v_{m_5k+2j_5+1-k}$. Also,  $d_{34}(u_0,v_{m_5k+2j_5+1-k})=m_5+2$,
$d_{34}(u_0,v_{m_5k+2j_5+1-k+1})=d_{34}(u_0,v_{m_5k+2j_5+1-k-1})=m_5+3$,
$d_{34}(u_0,v_{m_5k+2j_5+1-k+2})=d_{34}(u_0,v_{m_5k+2j_5+1-k-2})=m_5+4$, and so on.

If $j_5=\frac{k-2}{2}$, then $j^1=m_5k+\frac{k-2}{2}$.
Because $d_{34}(u_0,v_{j^1})\ge d_{12}(u_0,v_{j^1})$, we have $j^*=j^1=m_5k+\frac{k-2}{2}$.

If $j_5=\frac{k}{2}$, then $j^1=m_5k+\frac{k}{2}$.
Because $d_{34}(u_0,v_{j^1})\ge d_{12}(u_0,v_{j^1})$, we infer that $j^*=j^1=m_5k+\frac{k}{2}$.

If $3\le 2j_5+1-k\le\frac{k}{2}$, then $j^1=m_5k+\frac{k}{2}$ and $d_{34}(u_0,v_{j^1})<d_{12}(u_0,v_{j^1})$.
We get $j^*=m_5k+j_5$.

If $2j_5+1-k\ge\frac{k+2}{2}$, then $j^1=m_5k+\frac{k}{2}$ and $d_{34}(u_0,v_{j^1})<d_{12}(u_0,v_{j^1})$.
We get $j^*=m_5k+j_5+1-\frac{k}{2}$.

Table~\ref{T:case7} shows that how to find $j^*$ in $GP(57,10)$.

\begin{table}
\begin{center}
\begin{tabular}{|c|c|c|c|c|c|c|c|}
\hline
$v_j$ & $v_{23}$ & $v_{24}$  & $v_{25}$ &$v_{26}$ &$v_{27}$ &$v_{28}$    \\
\hline
$d_{12}(u_0,v_j)$ & 6 & 7 & 8 & 8 & 7 & 6  \\
\hline
$d_{34}(u_0,v_j)$ & 8 & 7 & 6 & 5 & 4 & 5  \\
\hline
\end{tabular}
\end{center}
\centerline{}  \caption{\small In $GP(57,10)$, the search of $j^*$. Because $j^1=25$, $d_{12}(u_0,v_{25})=8$,
$n-(m_5+1)k=27$ and $d_{34}(u_0,v_{27})=4$, so $j^*=24$. }
\label{T:case7}
\end{table}

\medskip\noindent
{\bf Case 2.2.2}: $j_5< \frac{k-3}{2}$.

It suffices to consider the distances $d(u_0, v_{(m_5-1)k+j})$, where $0\le j\le k$.
Note that $d_{12}(u_0,v_{(m_5-1)k+\frac{k}{2}})=d_{12}(u_0,v_{(m_5-1)k+\frac{k+2}{2}})=m_5+\frac{k}{2}$,
$d_{12}(u_0,v_{(m_5-1)k+\frac{k}{2}-1})=d_{12}(u_0,v_{(m_5-1)k+\frac{k+2}{2}+1})=m_5+\frac{k}{2}-1$, and so on.

Note that $v_{-(m_5+1)k}=v_{n-(m_5+1)k}=v_{(m_5-1)k+2j_5+1}$ and $2j_5+1< k-2$. We have
$d_{34}(u_0,v_{(m_5-1)k+2j_5+1})=m_5+2$,
$d_{34}(u_0,v_{(m_5-1)k+2j_5+1+1})=d_{34}(u_0,v_{(m_5-1)k+2j_5+1-1})=m_5+3$,
$d_{34}(u_0,v_{(m_5-1)k+2j_5+1+2})=d_{34}(u_0,v_{(m_5-1)k+2j_5+1-2})=m_5+4$, and so on.

If $j_5=0$, then $j^1=(m_5-1)k+\frac{k}{2}$.
Because $d_{34}(u_0,v_{j^1})\ge d_{12}(u_0,v_{j^1})$, we infer that $j^*=j^1=(m_5-1)k+\frac{k}{2}$.

If $3\le 2j_5+1\le\frac{k}{2}$, then $j^1=(m_5-1)k+\frac{k}{2}$ and $d_{34}(u_0,v_{j^1})<d_{12}(u_0,v_{j^1})$.
We get $j^*=(m_5-1)k+\frac{k}{2}+j_5$.

If $2j_5+1\ge\frac{k+2}{2}$, then $j^1=(m_5-1)k+\frac{k}{2}$ and $d_{34}(u_0,v_{j^1})<d_{12}(u_0,v_{j^1})$.
We get $j^*=(m_5-1)k+j_5+2$ or $j^*=(m_5-1)k+j_5+1$.

Table~\ref{T:case8} shows that how to find $j^*$ in $GP(45,10)$.

\begin{table}
\begin{center}
\begin{tabular}{|c|c|c|c|c|c|c|c|}
\hline
$v_j$ & $v_{12}$ & $v_{13}$  & $v_{14}$ &$v_{15}$ &$v_{16}$ &$v_{17}$ &$v_{18}$   \\
\hline
$d_{12}(u_0,v_j)$ & 4 & 5 & 6 & 7 & 7 & 6 & 5 \\
\hline
$d_{34}(u_0,v_j)$ & 7 & 6 & 5 & 4 & 5 & 6 & 7 \\
\hline
\end{tabular}
\end{center}
\centerline{}  \caption{\small In $GP(45,10)$, the search of $j^*$. Because $j^1=15$, $d_{12}(u_0,v_{15})=7$,
$n-(m_5+1)k=15$ and $d_{34}(u_0,v_{15})=4$, so $j^*=17$. }
\label{T:case8}
\end{table}

Suppose that $P^*$ is a shortest $u_0,v_{j^*}$-path.
Let $P^*+v_{j^*}u_{j^*}$ be the path obtained from $P^*$ by appending the edge $v_{j^*}u_{j^*}$ at $v_{j^*}$.
Because $k\ge3$ and $k<j^*\le n/2$, the path $P^*+v_{j^*}u_{j^*}$ is a shortest $u_0,u_{j^*}$-path
which contains $v_{j^*}$. We conclude that $d(u_0,v_{j^*})<d(u_0,u_{j^*})\le D$.

\section{On the diameter of $GP(n,k)$}
\label{S:GP-diameter}

In the previous section we found a $j^*$, where $0\le j^*\le n/2$, such that $d(u_0,v_{j^*}) = \max\{d(u_0,v_j):\ 0\le j< n\}$. In fact, the proof also reveals that
$$\diam(GP(n,k)) = d(u_0,u_{j^*}) = d(u_0,v_{j^*})+1\,,$$
which in turn enables us to state the following theorem.

\begin{theorem}\label{T:GP-diameter}
If $n$ and $k$ are integers, where $3\le k< n/2$ and
\begin{equation*}
n\ge\left\{\begin{array}{ll}
8; & k=3, \\
10; & k=4, \\
\frac{k(k+1)}{2}; & k \ \text{is odd and}\ k\ge 5,\\
\frac{k^2}{2}; & k \ \text{is even and}\ k\ge 6,
\end{array}\right.
\end{equation*}
then the following hold.
\begin{enumerate}
\item If $k\ge 3$, $k$ is odd, $n$ is even, and $\frac{n}{2}=mk+j$, where $\frac{k-1}{2}\le j<k$, then
\begin{equation*}
\diam(GP(n,k)) = \left\{\begin{array}{ll}
m+2+j; & j = \frac{k-1}{2}\ \text{or}\ j=\frac{k+1}{2}, \\
m+3+k-j; & 3\le 2j-k\le\frac{k+1}{2}, \\
m+2+j-\frac{k-1}{2}; & 2j-k>\frac{k+1}{2}.
\end{array}\right.
\end{equation*}
\item If $k\ge 3$, $k$ is odd, $n$ is even, and $\frac{n}{2}=mk+j$, where $0\le j<\frac{k-1}{2}$, then
\begin{equation*}
\diam(GP(n,k)) = \left\{\begin{array}{ll}
m+1+\frac{k+1}{2}; & j=0 \ \text{or}\ j=1, \\
m+3+\frac{k-1}{2}-j; & 4\le 2j\le\frac{k+1}{2}, \\
m+2+j; & 2j>\frac{k+1}{2}.
\end{array}\right.
\end{equation*}
\item If $k\ge 3$, $k$ is odd, $n$ is odd, and $\frac{n-1}{2}=mk+j$, where $\frac{k-2}{2}\le j<k$, then
\begin{equation*}
\diam(GP(n,k)) = \left\{\begin{array}{ll}
m+2+\frac{k-1}{2}; & j=\frac{k-1}{2}, \\
m+2+k-j; & 2\le 2j+1-k\le\frac{k+1}{2}, \\
m+2+j-\frac{k-1}{2}; & 2j+1-k>\frac{k+1}{2}.
\end{array}\right.
\end{equation*}
\item If $k\ge 3$, $k$ is odd, $n$ is odd, and $\frac{n-1}{2}=mk+j$, where $0\le j<\frac{k-2}{2}$, then
\begin{equation*}
\diam(GP(n,k)) = \left\{\begin{array}{ll}
m+2+\frac{k-1}{2}-j; & 1\le 2j+1\le\frac{k+1}{2}, \\
m+2+j; & 2j+1>\frac{k+1}{2}.
\end{array}\right.
\end{equation*}
\item If $k\ge 4$, $k$ is even, $n$ is even, and $\frac{n}{2}=mk+j$, where $\frac{k-2}{2}\le j<k$, then
\begin{equation*}
\diam(GP(n,k)) = \left\{\begin{array}{ll}
m+2+j; & j=\frac{k-2}{2}\ \text{or}\ j=\frac{k}{2}, \\
m+3+k-j; & 2\le 2j-k\le\frac{k}{2}, \\
m+2+j-\frac{k}{2}; & 2j-k\ge\frac{k+2}{2}.
\end{array}\right.
\end{equation*}
\item If $k\ge 4$, $k$ is even, $n$ is even, and $\frac{n}{2}=mk+j$, where $0\le j<\frac{k-2}{2}$, then
\begin{equation*}
\diam(GP(n,k)) = \left\{\begin{array}{ll}
m+1+\frac{k}{2}; & j=0, \\
m+2+\frac{k}{2}-j; & 2\le 2j\le\frac{k}{2}, \\
m+2+j; & 2j\ge\frac{k+2}{2}.
\end{array}\right.
\end{equation*}
\item If $k\ge 4$, $k$ is even, $n$ is odd, and $\frac{n-1}{2}=mk+j$, where $\frac{k-3}{2}\le j<k$, then
\begin{equation*}
\diam(GP(n,k)) = \left\{\begin{array}{ll}
m+2+\frac{k-2}{2}; & j=\frac{k-2}{2}, \\
m+2+k-j; & 1\le 2j+1-k\le\frac{k}{2}, \\
m+3+j-\frac{k}{2}; & 2j+1-k\ge\frac{k+2}{2}.
\end{array}\right.
\end{equation*}
\item If $k\ge 4$, $k$ is even, $n$ is odd, and $\frac{n-1}{2}=mk+j$, where $0\le j<\frac{k-3}{2}$, then
\begin{equation*}
\diam(GP(n,k)) = \left\{\begin{array}{ll}
m+1+\frac{k}{2}; & j=0, \\
m+2+\frac{k}{2}-j; & 3\le 2j+1\le\frac{k}{2}, \\
m+2+j; & 2j+1\ge\frac{k+2}{2}.
\end{array}\right.
\end{equation*}
\end{enumerate}
\end{theorem}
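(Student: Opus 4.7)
The plan is to read off each of the eight diameter formulas directly from the case analysis already carried out in Section~\ref{S:GP-DDB}. The starting point is the identity
\[
\diam(GP(n,k)) \;=\; d(u_0,v_{j^*})+1,
\]
which was essentially established at the end of Section~\ref{S:GP-DDB}: under the hypotheses on $n,k$ we have $k<j^*\le n/2$, so appending the spoke $v_{j^*}u_{j^*}$ to a shortest $u_0,v_{j^*}$-path yields a shortest $u_0,u_{j^*}$-path of length $d(u_0,v_{j^*})+1$, and this quantity realizes the diameter because $j^*$ was defined to maximize $d(u_0,v_j)$ over $0\le j\le n/2$ and, by symmetry, $\max_x d(u_0,x)$ is attained at some $u_{j^*}$. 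Hence computing $\diam(GP(n,k))$ reduces to evaluating $\min\{d_{12}(u_0,v_{j^*}),\,d_{34}(u_0,v_{j^*})\}+1$ case by case.

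The eight items of Theorem~\ref{T:GP-diameter} are in bijective correspondence with the eight cases 1.1.1, 1.1.2, 1.2.1, 1.2.2, 2.1.1, 2.1.2, 2.2.1, 2.2.2 of Section~\ref{S:GP-DDB}, with the parameter $m$ in the theorem playing the role of $m_2,m_3,m_4,m_5$ and $j$ the role of $j_2,j_3,j_4,j_5$, and the sub-cases of each item matching the sub-ranges of the residue treated there. For each sub-case Section~\ref{S:GP-DDB} already supplies an explicit formula for $j^*$ together with the values of $d_{12}(u_0,v_{j^*})$ and $d_{34}(u_0,v_{j^*})$, computed from the length formulas $|P_1|=j_0+m_0+1$, $|P_2|=(k-j_0)+m_0+2$, $|P_3|=j_1+m_1+1$ and $|P_4|=(k-j_1)+m_1+2$ recorded just after the definitions of the four paths. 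Taking the minimum of these two distances and adding $1$ yields the claimed diameter; for example, in the first sub-case of item~1 one has $j^*=m_2k+\tfrac{k-1}{2}$ with $d_{12}(u_0,v_{j^*})=m_2+1+\tfrac{k-1}{2}$ and $d_{34}(u_0,v_{j^*})\ge d_{12}(u_0,v_{j^*})$, giving $\diam=m+2+j$ as stated.

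The execution is thus a uniform table lookup: in those sub-cases where $d_{34}(u_0,v_{j^1})\ge d_{12}(u_0,v_{j^1})$ so that $j^*=j^1$, the diameter takes a ``midpoint'' form such as $m+1+\tfrac{k+1}{2}$ or $m+2+\tfrac{k-1}{2}$; in sub-cases where $j^*$ has been shifted from $j^1$ to balance the two directions, one substitutes the shifted index from Section~\ref{S:GP-DDB} into whichever of $P_1,P_2,P_3,P_4$ is minimizing and adds~$1$. The main obstacle is purely organizational bookkeeping: there are $8$ primary cases with a total of roughly two dozen sub-cases, and at every boundary one must verify (i) which of $P_1,\dots,P_4$ attains the minimum, (ii) that the claimed $j^*$ indeed lies in $(k,n/2]$ under the stated lower bound on $n$ (so that the spoke-appending argument is valid), and (iii) the consistency of the parity-dependent expressions $\tfrac{k\pm 1}{2}$ vs.\ $\tfrac{k\pm 2}{2}$ together with $\tfrac{n-1}{2}$ vs.\ $\tfrac{n}{2}$. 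Once these alignments are checked, no new combinatorial input beyond Section~\ref{S:GP-DDB} is needed and the eight formulas follow by direct substitution.
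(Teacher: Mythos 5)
Your proposal matches the paper's own treatment: the paper likewise obtains Theorem~\ref{T:GP-diameter} by combining the identity $\diam(GP(n,k))=d(u_0,u_{j^*})=d(u_0,v_{j^*})+1$ (justified by the spoke-appending argument at the end of Section~\ref{S:GP-DDB}) with a case-by-case readout of $j^*$ and $\min\{d_{12},d_{34}\}$ from the eight cases 1.1.1--2.2.2, adding $1$ in each sub-case. Your sample verification (item~1, $j=\frac{k-1}{2}$) is consistent with that readout, so the proposal is correct and essentially identical in approach to the paper's.
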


\section{Concluding remarks}
\label{S:conluding}

In this paper we proved that $GP(n,k)$ is $\diam(GP(n,k))$-distance-balanced provided that $n$ is large enough relative to $k$. In these cases we also determined $\diam(GP(n,k))$. For small values of $k$, we can strengthen these results as follows.

From~\cite{Miklavic:2018} we know that $GP(n,2)$, $n\ge 5$, is $diam(GP(n,2))$-distance-balanced. For $k=2$ and $n\ge 5$, $\diam(GP(n,2))$ can also be computed. First, $\diam(GP(5,2))=2$, $\diam(GP(6,2))=4$, and $\diam(GP(7,2))=3$. Moreover, if $n=4m$ or $n=4m+1$, then $\diam(GP(n,2))=m+2$, and if $n=4m+2$ or $n=4m+3$, then $\diam(GP(n,2))=m+3$.

It is straightforward to check that $\diam(GP(7,3))=3$ and that $GP(7,3)$ is highly distance-balanced. Similarly,  $\diam(GP(9,4))=4$ and $GP(9,4)$ is $4$-distance-balanced. In addition, from~\cite{Miklavic:2018} we recall that $\diam(GP(11,5)) = \diam(GP(14,5)) = 5$, $\diam(GP(12,5)) = \diam(GP(13,5)) = 4$, and that $GP(n,5)$ is $\diam(GP(n,5))$-distance-balanced for $11\le n\le 14$. Moreover, $\diam(GP(n,6))=5$ and $GP(n,6)$ is $5$-distance-balanced for $13\le n\le 17$.

Combining the above results with Theorems~\ref{T:GP-DDB} and~\ref{T:GP-diameter}, the following result can be stated.

\begin{proposition}
If $k$ and $n$ are integers, where $2\le k\le 6$ and $n\ge 2k+1$, then $GP(n,k)$ is $\diam(GP(n,k))$-distance-balanced. Moreover, $\diam(GP(n,k))$ can be computed.
\end{proposition}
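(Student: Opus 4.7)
The plan is to reduce the proposition to a finite compilation of already-established facts, by splitting the argument into five cases according to $k\in\{2,3,4,5,6\}$ and, within each case, separating the small values of $n$ that are not covered by Theorem~\ref{T:GP-DDB} from the large values that are. For $k=2$ everything is dispatched by \cite{Miklavic:2018}: distance-balancedness of $GP(n,2)$ for all $n\ge 5$, together with the four diameter formulas (the sporadic values at $n\in\{5,6,7\}$ and the two arithmetic formulas for $n\equiv 0,1\pmod 4$ and $n\equiv 2,3\pmod 4$), are precisely what is recalled in the paragraph preceding the proposition. For $k=3$, the only value of $n\ge 2k+1$ falling outside the hypothesis $n\ge 8$ of Theorem~\ref{T:GP-DDB} is $n=7$, handled by the recorded fact that $GP(7,3)$ is highly distance-balanced with diameter~$3$. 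For $k=4$, the only excluded value is $n=9$, dispatched analogously. For $k=5$ the excluded set is $\{11,12,13,14\}$, and for $k=6$ it is $\{13,14,15,16,17\}$; in both ranges the diameter and the $\diam(GP(n,k))$-distance-balancedness come directly from the summary from \cite{Miklavic:2018} quoted above.

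All remaining pairs $(n,k)$ with $2\le k\le 6$ and $n\ge 2k+1$ satisfy the hypothesis of Theorem~\ref{T:GP-DDB}, so $GP(n,k)$ is $\diam(GP(n,k))$-distance-balanced, and $\diam(GP(n,k))$ is then read off from the relevant clause of Theorem~\ref{T:GP-diameter} according to the parities of $n$ and $k$ and the remainder of $\lfloor n/2\rfloor$ modulo $k$. Combining the small-$n$ data with the large-$n$ formulas yields both assertions of the proposition.

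The only real obstacle is bookkeeping: one must check that the five small-$n$ lists above, together with the four thresholds of Theorem~\ref{T:GP-DDB} (namely $8$, $10$, $15$, $18$ for $k=3,4,5,6$, respectively), exactly partition the set $\{(n,k):2\le k\le 6,\ n\ge 2k+1\}$, so that every such pair is covered by exactly one of the two regimes. No new distance computation on $GP(n,k)$ is required beyond what Theorem~\ref{T:GP-diameter} and the preceding paragraph already supply.
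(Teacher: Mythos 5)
Your proposal is correct and follows essentially the same route as the paper, which obtains the proposition by combining the recalled facts for the small cases ($GP(n,2)$ for $n\ge 5$, $GP(7,3)$, $GP(9,4)$, $GP(n,5)$ for $11\le n\le 14$, $GP(n,6)$ for $13\le n\le 17$) with Theorem~\ref{T:GP-DDB} and Theorem~\ref{T:GP-diameter} for the remaining values of $n$. Your bookkeeping (thresholds $8,10,15,18$ for $k=3,4,5,6$ and the corresponding excluded sets) matches the paper exactly, so nothing further is needed.
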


For $k\ge 7$ the remaining cases to be solved are collected as follows.

\begin{problem}
Let $k$ and $n$ be two integers, where $k\ge 7$.  Moreover, if $k$ is odd, then $2k+1\le n<\frac{k(k+1)}{2}$ and if $k$ is even, then $k\ge 8$ and $2k+1\le n<\frac{k^2}{2}$.
\begin{enumerate}
\item Is $GP(n,k)$ $\diam(GP(n,k))$-distance balanced?
\item Compute $\diam(GP(n,k))$.
\end{enumerate}
\end{problem}

Moreover, the $\ell$-distance-balancedness of $GP(n,k)$, where $\ell < \diam(GP(n,k))$, is widely open.

\begin{problem}
Let $n$ and $k$ be integers, where $n\ge 5$ and $2\le k< n/2$. For $1\le\ell<\diam(GP(n,k))$ determine whether $GP(n,k)$ is $\ell$-distance-balanced or not.
\end{problem}

\section*{Acknowledgments}

This work was supported by Shandong Provincial Natural Science Foundation of China (ZR2022MA077), the research grant NSFC (11971274) of China and IC Program of Shandong Institutions of Higher Learning For Youth Innovative Talents. Sand Klav\v{z}ar was supported by the Slovenian Research Agency (ARRS) under the grants P1-0297, J1-2452, N1-0285.

\section*{Conflict of interest statement}

On behalf of all authors, the corresponding author states that there is no conflict of interest.

\section*{Data availability statement}

Data sharing not applicable to this article as no datasets were generated or analysed during the current study.

\end{document}